\theoremstyle{plain}
\newtheorem{theorem}{Theorem}[section]
\newtheorem{lemma}[theorem]{Lemma}
\theoremstyle{definition}
\newtheorem{assumption}{Assumption}[subsection]
\newtheorem{remark}{Remark}[subsection]
\newcommand{\tr}[1]{\operatorname{tr} \left\{ #1 \right\}}
\title{Frequentist Oracle Properties of Bayesian Stacking Estimators}
\author[1]{Valentin Zulj\thanks{ \textbf{Corresponding author. E-mail:} \href{mailto:valentin.zulj@statistik.uu.se}{valentin.zulj@statistik.uu.se}}}
\author[1, 2]{Shaobo Jin}
\author[1]{Måns Magnusson}
\affil[1]{Department of Statistics, Uppsala University}
\affil[2]{Department of Mathematics, Uppsala University}
\date{}
\begin{document}
	\maketitle 
	
	\begin{abstract}
		\noindent Compromise estimation entails using a weighted average of outputs from several candidate models, and is a viable alternative to model selection when the choice of model is not obvious. As such, it is a tool used by both frequentists and Bayesians, and in both cases, the literature is vast and includes studies of performance in simulations and applied examples. However, frequentist researchers often prove oracle properties, showing that a proposed average asymptotically performs at least as well as any other average comprising the same candidates. On the Bayesian side, such oracle properties are yet to be established. This paper considers Bayesian stacking estimators, and evaluates their performance using frequentist asymptotics. Oracle properties are derived for estimators stacking Bayesian linear and logistic regression models, and combined with Monte Carlo experiments that show Bayesian stacking may outperform the best candidate model included in the stack. Thus, the result is not only a frequentist motivation of a fundamentally Bayesian procedure, but also an extended range of methods available to frequentist practitioners.  \\
		
		\noindent \textbf{Keywords:} Model averaging, $\mathcal{M}$-open, $ \mathcal{M}$-complete, regression.
	\end{abstract}
	
	\section{Introduction}
	
	The issue of specifying a model is common to many applications of statistical methods \citep[see e.g.][]{Hansen2005, deLuna2011, Ren2023}. In many cases, there is no obvious or straightforward choice, and thus, much effort has been put into providing off-the-shelf procedures that help specify an apt model \citep{Rao2001}. However, the suitability of such schemes, and of model selection as a general practice, has been questioned in the last decades. The criticisms range from \cite{Breiman1996}, who illustrates the instability of certain types of model selection in prediction problems, to \cite{Draper1995}, who reasons that the very fact that model selection is required \emph{implies structural uncertainty that in general is not fully assessed and propagated}, making for flawed inferences. The problems raised are serious, and have acted as a hotbed for the development of \textit{compromise estimation}, an alternative modeling strategy that considers a trade-off between several candidate models. This type of approach has been studied in both Bayesian \citep{HoetingTutorial, LeClarke2017} and frequentist \citep{Wolpert1992, HjortClaeskens2003} settings.
	
	Compromise estimators, also known as model averages or stacking estimators, have proved to be particularly fruitful as tools for improving predictive performance. In the frequentist literature, several types of model averages have been considered, often resulting in proofs of a certain type of asymptotic optimality known as an \textit{oracle property} \cite[see e.g.][]{HansenRacine2012, Gao2019}. Among other things, this means that, as the sample size $n \rightarrow \infty$, no single candidate model individually outperforms the compromise. Prominent examples include \cite{Hansen2007}, who develops Mallows' model averaging, \cite{HansenRacine2012}, who introduce and prove certain oracle properties of jackknife model averaging (JMA), and \cite{ZhangZouCarroll2015}, who develop model averaging based on a Kullback-Leibler criterion. Moreover, \cite{Zhao2020} consider jackknife averaging of ridge regression models, \cite{ZhaoZhouYang2019} apply JMA to discrete choice models, \cite{AndoLi2017} develop a weighting scheme for high-dimensional generalized linear models, while \cite{HjortClaeskens2003} and \cite{CharkhiClaeskensHansen2016} study the properties of model averages in a $\sqrt{n}$ local asymptotic framework. 
	
	From a Bayesian perspective, model averages \citep[see e.g.][]{RafteryMadiganHoeting1997, HoetingTutorial} traditionally use posterior model probabilities to weight candidates together, resulting in a posterior distribution that compromises between candidate models. This procedure is known as Bayesian model averaging (BMA). When the set of candidate models includes a model that correctly specifies the data generating distribution, \cite{ClydeIversen2013} state that BMA is the optimal action, under the quadratic loss function, for prediction of new outcomes. In such cases, the weights used to combine models are well defined, and may be computed either analytically or by use of simulation. However, when the set of candidates is not believed to contain the data generating distribution, \cite{ClydeIversen2013} argue that the specification of prior model probabilities is no longer reasonable, since such probabilities reflect the belief that each candidate represents the data generating mechanism. In practice, specifying the correct model is rarely considered realistic. Thus, in order to retain the benefits of compromise estimation in this context, a strategy for \textit{estimating} suitable weights is required. This motivates a Bayesian  re-interpretation of stacking, a frequentist approach to compromise estimation, first introduced by \cite{Wolpert1992}.
	
	Stacking has been shown to work well when used to combine misspecified candidate models, leading  \cite{ClydeIversen2013} to propose using Bayesian stacking to average posterior means,  and \cite{Yao2018} to suggest using stacking to combine posterior predictive distributions. As opposed to BMA, Bayesian stacking uses weights estimated by cross-validation, and thus does not depend on the use of posterior model probabilities, which are often difficult to correctly specify, and to obtain \citep{Yao2018}. The Bayesian interpretation of stacking is theoretically supported by \cite{LeClarke2017}, who provide Bayesian grounds for using cross-validation to estimate  weights, by showing that, under certain conditions, leave-one-out risk estimation is consistent, and that the stacking weights estimated using leave-one-out cross-validation asymptotically minimize a posterior expected loss. The asymptotic results are derived under the posterior marginal distribution of the outcome $\boldsymbol{y}$, rather than the data generating distribution of $\boldsymbol{y}$. This is not coherent with frequentist asymptotic theory, making it pertinent to asymptotically evaluate Bayesian stacking under the generating probability of $\boldsymbol{y}$. In particular, this may provide frequentist support for the use of Bayesian stacking, and also offer new tools to frequentist practitioners.
	
	Bayesian stacking of posterior means is similar to the frequentist procedure of jackknife model averaging, in that they employ similar cross-validation strategies to estimate weights. The literature concerning JMA is vast, and includes proofs of theorems asserting properties of asymptotic optimality, also known as \textit{oracle properties} \citep[see e.g.][]{Hansen2007, HansenRacine2012}. Due to the similarity of Bayesian stacking to JMA, certain asymptotic properties dervied for JMA can be generalized to cover a range of Bayesian stacking approaches. As such, this paper aims to 
	\begin{enumerate}
		
		\item Prove an oracle property that applies to Bayesian stacking of a general form of Bayesian linear regression models, assuming $\mathcal{N}(\boldsymbol{0}, \boldsymbol{S})$ priors on the regression coefficients, and 
		
		\item Discuss the application of existing oracle properties to Bayesian stacking of logistic regression models, as well as linear regression models assuming a more general class of priors.
		
	\end{enumerate}
	The first contribution builds on Theorem 2 of \cite{Zhao2020}, and extends this result to encompass Bayesian stacking of linear regression models specified as above. Here, the symmetric and positive definite covariance matrix $\boldsymbol S$ can be specified such that $\boldsymbol \beta$ follows the \cite{Zellner1986} $g$ prior, for example. For the second contribution, the oracle property derived by \cite{ZhaoZhouYang2019} is discussed in the context of Bayesian logistic regression. The discussion is centered around posterior concentration results given by \cite{Ramamoorthi2015} and \cite{KleijnvanderVaart2012}, which also provides some grounds for using $\mathcal{T}$ priors, commonly recommended in Bayesian settings (see e.g. Stan's \href{https://github.com/stan-dev/stan/wiki/Prior-Choice-Recommendations}{Prior Choice Recommendations}), to formulate candidate models. To complement the theoretical study with empirical results, the paper also provides a set of simulated examples, where the proposed stacking estimators are compared to the best candidate in each stack.
	
	The rest of the paper is organized as follows. Section \ref{sec:prelim} provides a detailed introduction to compromise estimators in general, as well as more detailed discussion of Jackknife averaging and Bayesian stacking in particular. Section \ref{sec:optimality} introduces the concept of oracle properties, and provides theorems establishing the results discussed in the preceding paragraph. Section \ref{sec:simulations} gives details of the Monte Carlo experiments performed to investigate the finite sample performance of the methods studied, and, lastly, Section \ref{sec:conclusions} summarizes the findings of the paper.

		\section{Notation and Preliminaries} \label{sec:prelim}
	
	Suppose that, for each unit $i = 1, \ldots, n$, a random outcome, $y_i$, and a $p$-dimensional vector of fixed covariates, $\boldsymbol{x}_i$, are observed, and define $\boldsymbol{y} = (y_1, \ldots, y_n)^T$ and $\boldsymbol{X} = (\boldsymbol{x}_1, \ldots, \boldsymbol{x}_n)^T$. Moreover, let $\mathcal{M} = \{M_1, \ldots, M_K\}$ be a collection of candidate models considered suitable for predicting new observations of the outcome. For example, the $M_k$ may differ in the sense that they use different sets of covariates or different values of some tuning or regularization parameter.
	
	In accordance with \cite{LeClarke2017}, the set of candidates $\mathcal{M}$ is classified as either \textit{closed}, \textit{complete}, or \textit{open}. The classes differ in the sense that
	
	\begin{itemize}
		\item $\mathcal{M}$-closed represents the situation where one $M_k \in \mathcal{M}$ is believed to cover the data generating mechanism, although there is no knowledge of which candidate it is, whereas 
		
		\item $\mathcal{M}$-complete represents the case where data generating mechanism can be conceptualized, but cannot be included in $\mathcal{M}$ to due to, for example, complexity or practical feasibility. Lastly,
		
		\item  $\mathcal{M}$-open represents the case where the data generating mechanism cannot be conceptualized, and because of that cannot be included in $\mathcal{M}$.
	\end{itemize}
	
	\noindent While the partition of the model space into the above three classes is common in the Bayesian stacking literature, the definitions of $\mathcal{M}$-complete and $\mathcal{M}$-open problems are not entirely agreed upon. For examples of alternative (though very similar) definitions of the two, see \cite{BernardoSmith1994} or \cite{ClydeIversen2013}. The methods considered in the current study \citep[see e.g.][for jackknife averaging and Bayesian stacking respectively]{HansenRacine2012, Yao2018} are developed under the assumption that the candidate set does not contain a model encapsulating the data generating process. However, in order to construct asymptotic arguments, the data generating process needs to be conceptualized, and, as such, the working assumption when deriving oracle properties is $\mathcal{M}$-complete. 
	
	\subsection{Compromise estimation}
	
	In empirical applications, it is common to select one candidate model using some selection procedure, and proceed using the model as if it were the truth. As discussed above, this may yield sub-optimal performance, and in order to work around these issues, the modeling procedure can be altered to consider a compromise between all candidate models, rendering a combined prediction or posterior (predictive) distribution using information from the models in $\mathcal{M}$. The general form of the combined estimator or posterior probability is given by
	
	\begin{align}\label{eq:fma_stacking}
		\sum_{k = 1}^{K} w_k  \boldsymbol{\varphi}_k (\boldsymbol{X},  \boldsymbol{y}),
	\end{align}
	
	\noindent where $w_k$ is a weight attributed to the $k$:th candidate model and $\boldsymbol{\varphi}_k (\boldsymbol{X}, \boldsymbol{y})$ may be a frequentist prediction \citep[as in e.g.][]{HansenRacine2012}, a posterior predictive distribution \citep[as in e.g.][]{Yao2018}, or a Bayesian point prediction \cite[as in e.g.][]{ClydeIversen2013}, generated using the model $M_k$. While the presentation above concerns a very general form of compromise estimator, this paper specifically considers the cases where $\boldsymbol \varphi_k$ are either point predictions (or estimates) made using Bayesian linear or logistic regression models. The compromise weights, typically collected in the vector $\boldsymbol{w} = (w_1, \ldots, w_K)$, are assumed to belong to some set $\mathcal{W}$, which can be specified to restrict the estimated values $\hat{\boldsymbol{w}}$. In the literature, the most common assumption is that $\boldsymbol w$ lies in the unit simplex,
	
	\begin{align}\label{eq:weights_def}
		\boldsymbol w \in \mathcal{W} = \left\{ \boldsymbol{w}: \boldsymbol{w} \in [0, 1]^K, \ \ \sum_k w_k = 1 \right\}.
	\end{align}
	
	\subsection{Jackknife model averaging} \label{sec:JMA}
	
	Jackknife model averaging is introduced by \cite{HansenRacine2012}, who use LOO-CV to select model average weights that minimize a version of the least squares cross-validation criterion, given in \eqref{eq:ls_cv_crit}. 
	
	Let $\boldsymbol{X}_{-i}$ and $\boldsymbol{y}_{-i}$ denote the the matrix $\boldsymbol{X}$ and $\boldsymbol y$ with the $i$:th observation removed. Using $\boldsymbol{X}_{-i}$, $\boldsymbol{y}_{-i}$, and model $M_k$, compute the leave-one-out prediction of $y_i$ and denote it by $\Tilde{y}_{i,k}$. Further, define the vector $\Tilde{\boldsymbol{y}}_k = (\Tilde{y}_{1,k}, \ldots, \Tilde{y}_{n,K})^T$, the jackknife residual vector of the $k$:th candidate model, $\Tilde{\boldsymbol{e}}_k = \boldsymbol{y} - \Tilde{\boldsymbol{y}}_k$, and consider a vector of weights as given in \eqref{eq:weights_def}. Then, $\sum_{k = 1}^{K} w_k \Tilde{\boldsymbol{y}}_k$ is the averaged jackknife prediction of $\boldsymbol{y}$, which gives
	
	\begin{align}
		\boldsymbol{y} - \sum_{k = 1}^{K} w_k \Tilde{\boldsymbol{y}}_k
		= \sum_{k = 1}^{K} w_k (\boldsymbol{y} - \Tilde{\boldsymbol{y}}_k) 
		= \sum_{k = 1}^{K} w_k \Tilde{\boldsymbol{e}}_k
		= \tilde{\boldsymbol{e}} \boldsymbol{w}
	\end{align}
	
	\noindent as the jackknife average residual. Here $\Tilde{\boldsymbol{e}} = (\Tilde{\boldsymbol{e}}_1 \; \cdots \;  \Tilde{\boldsymbol{e}}_K )$ is an $n \times K$ matrix storing the jackknife residuals from each model column-wise. To estimate the jackknife average weights, define the squared error of the averaged jackknife prediction
	
	\begin{align} \label{eq:ls_cv_crit}
		\left( \boldsymbol{y} - \sum_{k = 1}^{K} w_k \Tilde{\boldsymbol{\mu}}_k \right)^{T} \left( \boldsymbol{y} - \sum_{k = 1}^{K} w_k \Tilde{\boldsymbol{\mu}}_k \right)
		= (\tilde{\boldsymbol{e}} \boldsymbol{w})^{T} (\tilde{\boldsymbol{e}} \boldsymbol{w}) 
		= \boldsymbol{w}^T \boldsymbol{S} \boldsymbol{w}, 
	\end{align}
	
	\noindent where $\boldsymbol{S} = \Tilde{\boldsymbol{e}}^{T} \Tilde{\boldsymbol{e}}$ is known as the least squares cross-validation criterion. Then, the the jackknife model average weights are given by
	
	\begin{align} \label{eq:jma_weights}
		\widehat{\boldsymbol{w}}_{\scriptscriptstyle \text{JMA}} 
		= \operatorname*{argmin}_{\boldsymbol{w} \in \mathcal{W}} \boldsymbol{w}^T \boldsymbol{S} \boldsymbol{w},
	\end{align}
	
	\noindent and the JMA prediction of $\boldsymbol y$ is computed by using $\hat{\boldsymbol y}_k$ and the elements of $\widehat{\boldsymbol{w}}_{\scriptscriptstyle \text{JMA}}$ in place of $w_k$ and $\boldsymbol \varphi_k$ in Equation \eqref{eq:fma_stacking}.
	
	\subsection{Bayesian stacking} \label{sec:BS}
	
	The fundamental idea of stacking is introduced by \cite{Wolpert1992}, but the method is first considered in a Bayesian setting by \cite{ClydeIversen2013}, who propose the use of cross-validation to estimate weights in a procedure that minimizes the prediction error of a stack of posterior predictive means. Bayesian stacking is developed further by \cite{Yao2018}, who generalize the idea by considering proper scoring rules as weight estimation criteria, and thus allow for stacking of predictive distributions as well as posterior predictive means. The approach of \cite{Yao2018} includes that of \cite{ClydeIversen2013} as a special case, where the latter is obtained by using a particular specification of the energy scoring rule.
	
	For a general treatment, let $\mathcal S$ denote a scoring rule (see \cite{Yao2018} for a formal definition). Then, the primary objective of Bayesian stacking is to select a vector of weights that is optimal according to a criterion defined using $\mathcal{S}$. Formally, the Bayesian stacking problem can be characterized as
	
	\begin{align} \label{eq:stack_general}
		\operatorname*{argmax}_{\boldsymbol{w} \in \mathcal{W}} \ \mathcal{S} 
		\left\{\sum_{k = 1}^{K} w_k p(y^*|\boldsymbol{y}, \boldsymbol{X}, \boldsymbol{x}^*, M_k), \ p_{\text{true}}(y^*|\boldsymbol{y}, \boldsymbol{X}, \boldsymbol{x}^*)\right\},
	\end{align}
	
	\noindent where $y^*$ is an unseen realization of $Y$, $p(\cdot|\boldsymbol{y}, \boldsymbol{X}, \boldsymbol x^*, M_k)$ is the posterior predictive density derived using the $k$:th candidate model, and $p_{\text{true}}(\cdot|\boldsymbol{y}, \boldsymbol{X}, \boldsymbol x^*)$ is the data generating distribution. The subject of the stacking procedure is determined by the choice of $\mathcal S$, and \cite{Yao2018} provide grounds for using the logarithmic scoring rule to stack predictive distributions, whereas \cite{ClydeIversen2013} motivate the use of the energy scoring rule to combine posterior predictive means.
	
	Consider the proposal of \cite{ClydeIversen2013}, where $\mathcal S = \frac{1}{2} \mathbb{E}_P \Vert Y - Y'\Vert^\beta - \mathbb{E}_P \Vert Y - y\Vert^\beta$ is the energy score. Here, $Y$ and $Y'$ are independent random variables that follow some common distribution $P$, $y$ is a realization of $Y$, and $\beta$ is a constant. When $\beta = 2$, the energy score reduces to  $-\Vert \mathbb{E}_P [Y] - y\Vert^2$. Thus, when $P$ is taken as the stacked distribution given in \eqref{eq:stack_general}, the corresponding optimization problem can be stated as
	
	\begin{eqnarray} \label{eq:stacking_means}
		& & \operatorname*{argmax}_{\boldsymbol{w} \in \mathcal{W}} \left\{-\left\lVert \sum_k w_k \mathbb E[y^* | \boldsymbol{y}, \boldsymbol{X}, \boldsymbol{x}^*, M_k] - y^* \right \rVert^2 \right\} \nonumber \\
		&=& \operatorname*{argmin}_{\boldsymbol{w} \in \mathcal{W}} \left \lVert \sum_k w_k \mathbb E[y^*|\boldsymbol{y}, \boldsymbol{X}, \boldsymbol{x}^*,  M_k] - y^* \right \rVert^2,
	\end{eqnarray}
	
	\noindent where 
	
	\begin{align}
		\mathbb E[y^*|\boldsymbol{y}, \boldsymbol{X}, M_k] = \int y^* p(y^* | \boldsymbol y, \boldsymbol X, \boldsymbol x^*, M_k) d y^*
	\end{align}
	is the mean of the $k$:th candidate posterior predictive distribution. Since $y^*$ is unseen, the expressions in \eqref{eq:stacking_means} are intractable. However, given some assumptions,  \cite{LeClarke2017} show that
	
	\begin{eqnarray*}
		&& \frac{1}{n} \sum_{i = 1}^{n}  \left \lVert \sum_k w_k \mathbb E[y_i|\boldsymbol{y}_{-i}, \boldsymbol{X}_{-i}, M_k] - y_i \right \rVert^2 \\
		&& \hspace{0.5cm} - \int  \left \lVert \sum_k w_k \mathbb E[y^*|\boldsymbol{y}, \boldsymbol{X}, \boldsymbol{x}^*,  M_k] - y^* \right \rVert^2 p(y^* | \boldsymbol y, \boldsymbol X, \boldsymbol x^*, M_k) d y^* 
		\stackrel{L_2}{\longrightarrow} 0,
	\end{eqnarray*}
	
	\noindent as $n \rightarrow \infty$, under the posterior marginal distribution of the outcome $\boldsymbol{y}$. This motivates, leave-one-out cross-validation as a way of approximating the optimization problem from a Bayesian perspective. As such, the weights for stacking posterior predictive means are estimated by
	
	\begin{align} \label{eq:stack_crit}
		\hat{\boldsymbol w}_{\scriptscriptstyle \text{BS}}  = \operatorname*{argmin}_{\boldsymbol{w} \in \mathcal{W}} \frac{1}{n} \sum_{i = 1}^{n} \left \lVert \sum_k w_k \mathbb E[y_i|\boldsymbol{y}_{-i}, \boldsymbol{X}_{-i}, M_k] - y_i \right \rVert^2.
	\end{align}
	
	\noindent Here, it is clear that stacking of posterior means, as proposed by \cite{ClydeIversen2013}, closely resembles jackknife model averaging. Thus, certain optimality properties derived for jackknife model averages can be extended to consider Bayesian stacking estimators. Such extensions are considered below, for linear and logistic regression models.
	
		\section{Asymptotic optimality} \label{sec:optimality}
	
	As mentioned, the weighting schemes presented in the preceding section are similar in the sense that they both rely on leave-one-out cross-validation to select compromise weights, albeit one combines candidates estimated in the frequentist paradigm while the other considers candidates motivated from a Bayesian perspective. In light of this, it is pertinent to determine whether the (frequentist) asymptotic properties of JMA extend to Bayesian stacking. One aspect of interest is the oracle property,
	
	\begin{align} \label{eq:oracle}
		\frac{L_n(\hat{\boldsymbol{w}})}{\inf_{\boldsymbol{w} \in \mathcal{W}} L_n(\boldsymbol{w})} \stackrel{p}{\longrightarrow} 1, \text{ as } n \rightarrow \infty
	\end{align}
	
	\noindent where $L(\boldsymbol{w})$ is some loss function. The oracle property states that asymptotically, the estimated weight vector $\hat{\boldsymbol w}$ yields the same loss as the weight vector that minimzes the loss over the specific sets $\mathcal M$ and $\mathcal W$. In particular, when $\mathcal W$ is the unit simplex given in \eqref{eq:weights_def}, the oracle implies that the compromise estimator based on $\hat{\boldsymbol w}$ asymptotically performs at least as good as any one of the single candidates in $\mathcal M$.
	
	The oracle property has been proven in many applications of frequentist model averagin (FMA). For example, \cite{HansenRacine2012}, \cite{ZhangWanZhou2013}, and \cite{Zhao2020} prove oracle properties of average estimators combining linear regression models under different circumstances, while \cite{ZhaoZhouYang2019} and \cite{ZuljJin2021} derive similar results for generalized linear models. Proving such properties for Bayesian stacking of means can be considered as a frequentist motivation for using the method, but also as an initial step towards establishing deeper connections between Bayesian and frequentist methods. In the sections that follow, Theorem \ref{thm:linear_oracle} establishes an oracle property for a compromise between types of Bayesian linear regression models, while a similar oracle for Bayesian logistic regression models is discussed in Section \ref{sec:logitoracle}.
	
	\subsection{Linear models} \label{sec:linregoracle}
	
	In the same way as \cite{Zhao2020}, suppose the outcome $y_i$ is generated by
	
	\begin{align} \label{eq:linreg_dgp}
		y_i = \sum_{j = 1}^{\infty} z_{ij}\gamma_j + \varepsilon_i,
	\end{align}
	
	\noindent where $\boldsymbol{z}_i = (z_{i1}, z_{i2}, \ldots)^T$ is of countably infinite dimension, $\boldsymbol{Z} = (\boldsymbol{z}_1 \; \cdots \; \boldsymbol{z}_n)^T$, $\boldsymbol{\varepsilon}$ is an $n \times 1$ vector of random error terms that satisfies $\mathbb{E}(\varepsilon | \boldsymbol{Z}) = \boldsymbol{0}_{n \times 1}$, and $\mathbb{E}(\boldsymbol{\varepsilon} \boldsymbol{\varepsilon}^T | \boldsymbol{Z}) = \boldsymbol{\Omega} = \operatorname{diag}(\sigma^2_1, \ldots, \sigma^2_n)$. Here, like in \cite{Hansen2007} and \cite{Zhao2020}, a countably infinite number of $z_{i, j}$ is considered to reflect the fact that, in practice, the data generating process is often involved and may include a vast amount different factors. This is a situation where, typically, the data generating mechanism can be used to formulate probabilistic arguments,  but not realistically be included in the candidate set, thus emulating the $\mathcal{M}$-complete scenario. 
	
	Now, consider modeling the outcome using the data matrix $\boldsymbol{X}_{n \times p}$, where $\boldsymbol{X}$ might include a subset of the covariates in $\boldsymbol{Z}$ or some functions of certain covariates in $\boldsymbol{Z}$, but also might include some variables that are not necessarily present in the data generating process. Note that $\boldsymbol{X}$ has $p$ columns, meaning that the dimension of $\boldsymbol X$ may be different from the dimension of $\boldsymbol Z_{n \times q}$. In particular, consider the case where predictions are made as $\boldsymbol{X\hat{\beta}}_\lambda$, where $\boldsymbol{\hat{\beta}}_\lambda$ is the ridge regression estimator
	
	\begin{align}
		\hat{\boldsymbol{\beta}}_\lambda = \left(\boldsymbol{X}^T\boldsymbol{X} + \lambda \boldsymbol{I}_{p \times p}\right)^{-1} \boldsymbol{X}^T \boldsymbol{y},
	\end{align}
	
	\noindent with $\lambda \geq 0$. In general, the value of $\lambda$ is not given, but needs to be determined. This is typically done by comparison of a set of feasible values, $\{\lambda_1, \ldots, \lambda_K\}$, often using some form of out-of-sample validation. However, instead of selecting a single value of $\lambda$, each $\lambda_k$ can be seen as representing a candidate model. Then, the methods discussed earlier can be used to form a compromise between different $\lambda_k$, by setting  $\varphi_k(\boldsymbol{X}, \boldsymbol{y}) = \boldsymbol{X}\hat{\boldsymbol{\beta}}_{\lambda_k}$ in Equation \eqref{eq:fma_stacking}. Under certain conditions, \cite{Zhao2020} show that the jackknife average estimator combining such ridge regressions has the oracle property \eqref{eq:oracle}, with respect to $L(\boldsymbol{w}) = \lVert \boldsymbol{\mu} - \sum_k w_k \boldsymbol{X} \hat{\boldsymbol{\beta}}_{\lambda_k} \rVert^2$, where $\boldsymbol{\mu} = \mathbb{E}(\boldsymbol{y} | \boldsymbol{Z})$. 
	
	In a Bayesian regression model assuming $y_i|\boldsymbol{x}_i, \boldsymbol{\beta} \sim \mathcal{N}(\boldsymbol{x}_i^T \boldsymbol{\beta}, \sigma^2)$ with $\sigma^2$ taken as known, and using $\mathcal{N}(\boldsymbol{0}_{p \times 1}, \gamma^2 \boldsymbol{I}_{p \times p})$ as the prior distribution for $\boldsymbol{\beta}$, the posterior mean (or maximum a posteriori estimator) of $\boldsymbol{\beta}$ is $( \boldsymbol{X}^T\boldsymbol{X} + (\sigma/\gamma)^{2} \boldsymbol{I}_{p \times p} )^{-1} \boldsymbol{X}^T \boldsymbol{y}$. Thus, when $\gamma$ and $\sigma$ are chosen suitably, the point predictions made using this Bayesian estimator are equivalent to those made using the ridge regression model, and the criterion used to estimate Bayesian stacking weights in \eqref{eq:stack_crit} is equivalent to the jackknife weighting criterion in \eqref{eq:jma_weights}. As a result, the oracle property of \cite{Zhao2020} extends to Bayesian stacking of linear regressions, where candidates are distinguished through their priors, $\boldsymbol{\beta} \sim \mathcal{N}(\boldsymbol{0}_{p \times 1}, \gamma_k^2 \boldsymbol{I}_{p \times p})$ for $k = 1, \ldots, K$. In this context, it is interesting to further consider stacking of Bayesian regressions models assuming $\mathcal{N}(\boldsymbol{0}_{p \times 1}, \boldsymbol{S}_k)$ as priors for $\boldsymbol{\beta}$, where $\boldsymbol{S}_k > 0$ are symmetric but not necessarily diagonal. Such priors include, for example, the $g$ prior introduced by \cite{Zellner1986}, where $\boldsymbol{S}_k = g_k (\boldsymbol{X}^T\boldsymbol{X})^{-1}$, and $g_k > 0$ is some constant.
	
	Consider a set of $K$ Bayesian linear regressions with $y_i | \boldsymbol{x}_i, \boldsymbol{\beta}, M_k \sim \mathcal{N}(\boldsymbol{x}_i^T \boldsymbol{\beta}, \sigma^2)$, $\boldsymbol{\beta} | M_k \sim \mathcal{N} (\boldsymbol{0}_{p \times 1}, \boldsymbol{S}_{k})$, assuming $\sigma^2$ known and taking $\boldsymbol{S}_k$ as symmetric and positive definite. Then, $( \boldsymbol{X}^T\boldsymbol{X} + \sigma^{2} \boldsymbol{S}_k^{-1} )^{-1} \boldsymbol{X}^T \boldsymbol{y}$ is the posterior mean of $\boldsymbol{\beta}$ under model $M_k$, and the corresponding posterior predictive mean is given by 
	
	\begin{align}
		\mathbb E[y_i|\boldsymbol{y}_{-i}, \boldsymbol{X}_{-i}, M_k]  = \boldsymbol{x}_{i}^T  \left( \boldsymbol{X}_{-i}^T\boldsymbol{X}_{-i} + \sigma^{2} \boldsymbol{S}_k^{-1} \right)^{-1} \boldsymbol{X}_{-i}^T \boldsymbol{y}_{-i}.
	\end{align}
	
	\noindent Using this, the weights for stacking posterior predictive means are estimated by
	
	\begin{align}
		\hat{\boldsymbol{w}}_{\scriptscriptstyle \text{BS}}  = \operatorname*{argmin}_{\boldsymbol{w} \in \mathcal{W}} \frac{1}{n} \sum_{i = 1}^{n} \left \lVert \sum_k w_k \mathbb E[y_i|\boldsymbol{y}_{-i}, \boldsymbol{X}_{-i}, M_k] - y_i \right \rVert^2.
	\end{align}
	
	\noindent Theorem \ref{thm:linear_oracle} establishes the oracle property of $\sum_k \hat{w}_{\scriptscriptstyle \text{BS}, k} \mathbb E[\boldsymbol y|\boldsymbol{y}, \boldsymbol{X}, M_k]$, with
	
	\begin{align} \label{eq:postpred_kean}
		\mathbb E[\boldsymbol y|\boldsymbol{y}, \boldsymbol{X}, M_k] = \int \boldsymbol y \; p(\boldsymbol y | \boldsymbol y, \boldsymbol X, M_k) d \boldsymbol y = \boldsymbol X \left( \boldsymbol{X}^T\boldsymbol{X} + \sigma^{2} \boldsymbol{S}_k^{-1} \right)^{-1} \boldsymbol{X}^T \boldsymbol{y}.
	\end{align}
	
	To present the assumptions required to prove the theorem, some notation is introduced. First, let $\lambda_{\text{min}}(\boldsymbol{A})$ and $\lambda_{\text{max}}(\boldsymbol{A})$ denote the smallest and largest eigenvalues of the matrix $\boldsymbol{A}$, respectively. Further, define $\boldsymbol{P}_k = \boldsymbol{X} ( \boldsymbol{X}^T\boldsymbol{X} + \sigma^{2} \boldsymbol{S}_k^{-1})^{-1} \boldsymbol{X}^T$, and let $R(\boldsymbol{w}) = \mathbb{E}[L(\boldsymbol{w})| \boldsymbol{Z}] = \mathbb{E}\left[\lVert \boldsymbol{\mu} - \bar{\boldsymbol{\mu}} \rVert^2| \boldsymbol{Z}\right]$, and, subsequently, $\xi_n = \inf_{\boldsymbol{w} \in \mathcal{W}} R(\boldsymbol{w})$. Finally, let $\boldsymbol{w}_k^0$ be a unit vector where the $k$:th element is a one and the remaining elements are all zero. Unless stated otherwise, all limits are taken as $n \rightarrow \infty$.
	
	\begin{assumption} \label{ass:lm1} 
		$\lambda_{\text{min}}(\boldsymbol{X}^T \boldsymbol{X} / n)$ and $\lambda_{\text{max}}(\boldsymbol{X}^T \boldsymbol{X} / n)$ are bounded, below and above, by constants $c_0, c_1 \geq 0$, respectively. Also, $n^{-1/2}\boldsymbol{X}^T \boldsymbol{\varepsilon} = O_p(1)$. 
	\end{assumption}
	
	\begin{assumption} \label{ass:lm2} 
		$p^* = O\left(n^{-1}\right)$. Here, $p^* = \max_{1 \leq k \leq K} \max_{1 \leq i \leq n} P^k_{ii}$, and $P^k_{ii}$ is the $i$:th diagonal element of $\boldsymbol{P}_k$.
	\end{assumption}
	
	\begin{assumption} \label{ass:lm3} 
		$\sup_{i \in \{1, \ldots, n\} } \mathbb{E} [\varepsilon_i^4|\boldsymbol{z}_i] = O(1)$, almost surely.
	\end{assumption}
	
	\begin{assumption} \label{ass:lm4} 
		$\boldsymbol{\mu}^T \boldsymbol{\mu} / n = O(1)$, almost surely.
	\end{assumption}
	
	\begin{assumption} \label{ass:lm5} 
		$\xi_n^{-2} \sum_k R(\boldsymbol{w}_k^0) = o(1)$, almost surely.
	\end{assumption}
	
	\noindent The assumptions are the same as those made by \cite{Zhao2020}, who provide one-by-one discussion of each assumption.
	
	\begin{theorem} \label{thm:linear_oracle}
		Suppose that $\hat{\boldsymbol{w}}_{\scriptscriptstyle \text{BS}}$ is used to compute $\bar{\boldsymbol{\mu}} = \sum_k \hat{w}_k \mathbb E[\boldsymbol y|\boldsymbol{y}, \boldsymbol{X}, M_k] $, with $\mathbb E[\boldsymbol y|\boldsymbol{y}, \boldsymbol{X}, M_k] $ as given in Equation \eqref{eq:postpred_kean}. Then, given assumptions \ref{ass:lm1}--\ref{ass:lm5},  $\bar{\boldsymbol{\mu}}$ has the oracle property \eqref{eq:oracle} with respect to $L(\boldsymbol{w}) = \lVert \boldsymbol{\mu} - \bar{\boldsymbol{\mu}} \rVert^2$.
	\end{theorem}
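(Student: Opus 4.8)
The plan is to follow the route of \cite{HansenRacine2012} and Theorem~2 of \cite{Zhao2020}. The stacked posterior predictive mean is $\bar{\boldsymbol\mu}(\boldsymbol w)=\sum_k w_k\boldsymbol P_k\boldsymbol y$, so the analysis is driven entirely by the linear smoothers $\boldsymbol P_k=\boldsymbol X(\boldsymbol X^T\boldsymbol X+\sigma^2\boldsymbol S_k^{-1})^{-1}\boldsymbol X^T$, which here play the role that the ridge hat matrices $\boldsymbol X(\boldsymbol X^T\boldsymbol X+\lambda_k\boldsymbol I)^{-1}\boldsymbol X^T$ play in \cite{Zhao2020}: the only structural change is $\lambda_k\boldsymbol I\mapsto\sigma^2\boldsymbol S_k^{-1}$, again a symmetric positive definite matrix. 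Since Assumptions~\ref{ass:lm1}--\ref{ass:lm5} are stated through $\boldsymbol P_k$ and its diagonal $P^k_{ii}$, the argument transfers once the relevant facts about $\boldsymbol P_k$ are re-established. First I would record these: writing $\boldsymbol A_k=\boldsymbol X^T\boldsymbol X+\sigma^2\boldsymbol S_k^{-1}$, which is symmetric positive definite, $\boldsymbol P_k=\boldsymbol X\boldsymbol A_k^{-1}\boldsymbol X^T$ is symmetric with eigenvalues in $[0,1)$, so $\boldsymbol 0\preceq\boldsymbol P_k^2\preceq\boldsymbol P_k\preceq\boldsymbol I$; and a Sherman--Morrison computation on $\boldsymbol A_k-\boldsymbol x_i\boldsymbol x_i^T$ (the cross-product matrix with the $i$:th row deleted) yields the standard leave-one-out identity for penalised least squares, $y_i-\tilde y_{i,k}=(y_i-(\boldsymbol P_k\boldsymbol y)_i)/(1-P^k_{ii})$, i.e.\ $\tilde{\boldsymbol e}_k=\boldsymbol D_k(\boldsymbol I-\boldsymbol P_k)\boldsymbol y$ with $\boldsymbol D_k=\operatorname{diag}\{(1-P^k_{ii})^{-1}\}$. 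Assumption~\ref{ass:lm2} then gives $\tr{\boldsymbol P_k}\le np^*=O(1)$ and $\lVert\boldsymbol D_k-\boldsymbol I\rVert_{\mathrm{op}}=O(p^*)=O(n^{-1})$.

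Write $\boldsymbol y=\boldsymbol\mu+\boldsymbol\varepsilon$. Using $\sum_k w_k=1$, the in-sample averaged residual is $\sum_k w_k(\boldsymbol I-\boldsymbol P_k)\boldsymbol y=(\boldsymbol\mu-\bar{\boldsymbol\mu}(\boldsymbol w))+\boldsymbol\varepsilon$, whereas the averaged jackknife residual is $\tilde{\boldsymbol e}\boldsymbol w=\sum_k w_k(\boldsymbol I-\boldsymbol P_k)\boldsymbol y+\boldsymbol\delta(\boldsymbol w)$ with $\boldsymbol\delta(\boldsymbol w)=\sum_k w_k(\boldsymbol D_k-\boldsymbol I)(\boldsymbol I-\boldsymbol P_k)\boldsymbol y$. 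The estimator $\hat{\boldsymbol w}_{\scriptscriptstyle\text{BS}}$ minimises $C_n(\boldsymbol w):=\lVert\tilde{\boldsymbol e}\boldsymbol w\rVert^2$, which is $n$ times the criterion in \eqref{eq:stack_crit} and so has the same minimiser. Expanding,
\begin{align*}
	C_n(\boldsymbol w)=\lVert\boldsymbol\varepsilon\rVert^2+L_n(\boldsymbol w)+2\sum_k w_k\,\boldsymbol\varepsilon^T(\boldsymbol I-\boldsymbol P_k)\boldsymbol\mu-2\sum_k w_k\,\boldsymbol\varepsilon^T\boldsymbol P_k\boldsymbol\varepsilon+r_n(\boldsymbol w),
\end{align*}
using $\boldsymbol\varepsilon^T(\boldsymbol\mu-\bar{\boldsymbol\mu}(\boldsymbol w))=\sum_k w_k\boldsymbol\varepsilon^T(\boldsymbol I-\boldsymbol P_k)\boldsymbol\mu-\sum_k w_k\boldsymbol\varepsilon^T\boldsymbol P_k\boldsymbol\varepsilon$, where $r_n(\boldsymbol w)$ collects $\lVert\boldsymbol\delta(\boldsymbol w)\rVert^2$, $2\boldsymbol\delta(\boldsymbol w)^T(\boldsymbol\mu-\bar{\boldsymbol\mu}(\boldsymbol w))$ and $2\boldsymbol\varepsilon^T\boldsymbol\delta(\boldsymbol w)$. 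Since $\lVert\boldsymbol\varepsilon\rVert^2$ does not involve $\boldsymbol w$, it then suffices to show that, uniformly over $\boldsymbol w\in\mathcal W$: (i) $L_n(\boldsymbol w)/R(\boldsymbol w)\stackrel{p}{\longrightarrow}1$ and $\inf_{\boldsymbol w}L_n(\boldsymbol w)/\xi_n\stackrel{p}{\longrightarrow}1$; and (ii) every term in the display other than $\lVert\boldsymbol\varepsilon\rVert^2$ and $L_n(\boldsymbol w)$ is $o_p(\xi_n)$.

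The required estimates are as follows. Assumption~\ref{ass:lm5} gives $\xi_n\to\infty$ (as $R(\boldsymbol w_k^0)\ge\xi_n$ forces $K/\xi_n=o(1)$), so every $O_p(1)$ term is $o_p(\xi_n)$. The quadratic-in-$\boldsymbol\varepsilon$ term $\sum_k w_k\boldsymbol\varepsilon^T\boldsymbol P_k\boldsymbol\varepsilon$ is $O_p(1)$ uniformly in $\boldsymbol w$, because $\boldsymbol\varepsilon^T\boldsymbol P_k\boldsymbol\varepsilon=(\boldsymbol X^T\boldsymbol\varepsilon)^T\boldsymbol A_k^{-1}(\boldsymbol X^T\boldsymbol\varepsilon)\le\lVert\boldsymbol X^T\boldsymbol\varepsilon\rVert^2/\lambda_{\text{min}}(\boldsymbol X^T\boldsymbol X)$ and Assumption~\ref{ass:lm1} bounds both factors; the same estimate and $\boldsymbol P_k^2\preceq\boldsymbol P_k$ give $\sup_{\boldsymbol w}\lVert\sum_k w_k\boldsymbol P_k\boldsymbol\varepsilon\rVert=O_p(1)$ and $\sup_{\boldsymbol w}L_n(\boldsymbol w)=o_p(\xi_n^2)$. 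The linear-in-$\boldsymbol\varepsilon$ term $\sum_k w_k\boldsymbol\varepsilon^T(\boldsymbol I-\boldsymbol P_k)\boldsymbol\mu$ is $o_p(\xi_n)$ uniformly, because the conditional variance of $\boldsymbol\varepsilon^T(\boldsymbol I-\boldsymbol P_k)\boldsymbol\mu$ is at most $(\max_i\sigma_i^2)\lVert(\boldsymbol I-\boldsymbol P_k)\boldsymbol\mu\rVert^2\le C\,R(\boldsymbol w_k^0)$ (using $\lVert(\boldsymbol I-\boldsymbol P_k)\boldsymbol\mu\rVert^2\le R(\boldsymbol w_k^0)$ together with Assumptions~\ref{ass:lm3}--\ref{ass:lm4}), so $\max_k\lvert\boldsymbol\varepsilon^T(\boldsymbol I-\boldsymbol P_k)\boldsymbol\mu\rvert=O_p(\{\sum_k R(\boldsymbol w_k^0)\}^{1/2})=o_p(\xi_n)$ by Assumption~\ref{ass:lm5}. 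The remainder $r_n(\boldsymbol w)$ is $o_p(\xi_n)$ because $\lVert\boldsymbol\delta(\boldsymbol w)\rVert\le\max_k\lVert\boldsymbol D_k-\boldsymbol I\rVert_{\mathrm{op}}\cdot\max_k\lVert(\boldsymbol I-\boldsymbol P_k)\boldsymbol y\rVert=O(n^{-1})O_p(\lVert\boldsymbol y\rVert)=O_p(n^{-1/2})$ by Assumptions~\ref{ass:lm2}--\ref{ass:lm4}, so each of its three pieces is $o_p(\xi_n)$ (the cross piece using $\sup_{\boldsymbol w}L_n(\boldsymbol w)^{1/2}=o_p(\xi_n)$). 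Claim~(i) follows from an entirely analogous decomposition of $L_n(\boldsymbol w)-R(\boldsymbol w)$ into a linear-in-$\boldsymbol\varepsilon$ cross term, controlled by Cauchy--Schwarz and $\lVert\sum_k w_k(\boldsymbol I-\boldsymbol P_k)\boldsymbol\mu\rVert^2\le R(\boldsymbol w)$, and a quadratic-in-$\boldsymbol\varepsilon$ deviation that is $O_p(1)$ by Assumption~\ref{ass:lm3}, both $o_p(R(\boldsymbol w))$ since $R(\boldsymbol w)\ge\xi_n\to\infty$; $\inf_{\boldsymbol w}L_n(\boldsymbol w)/\xi_n\stackrel{p}{\longrightarrow}1$ then follows by sandwiching. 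Assembling, $C_n(\boldsymbol w)=\lVert\boldsymbol\varepsilon\rVert^2+L_n(\boldsymbol w)(1+o_p(1))+o_p(\xi_n)$ uniformly on $\mathcal W$; evaluating at $\hat{\boldsymbol w}_{\scriptscriptstyle\text{BS}}$ and at a minimiser $\boldsymbol w^\ast$ of $L_n$, with $C_n(\hat{\boldsymbol w}_{\scriptscriptstyle\text{BS}})\le C_n(\boldsymbol w^\ast)$ and $L_n(\hat{\boldsymbol w}_{\scriptscriptstyle\text{BS}})\ge\inf_{\boldsymbol w}L_n(\boldsymbol w)$, yields $L_n(\hat{\boldsymbol w}_{\scriptscriptstyle\text{BS}})/\inf_{\boldsymbol w}L_n(\boldsymbol w)\stackrel{p}{\longrightarrow}1$, i.e.\ \eqref{eq:oracle}.

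The main obstacle is the uniform-in-$\boldsymbol w$ control in (i) and (ii): the linear and quadratic forms in $\boldsymbol\varepsilon$ and the loss-minus-risk gap must be bounded over all of $\mathcal W$, not merely pointwise. The device that makes this possible is that $\bar{\boldsymbol\mu}(\boldsymbol w)$ is linear in $\boldsymbol w$ and $\mathcal W$ is the unit simplex, so each supremum is bounded by a maximum over the $K$ vertices $\boldsymbol w_k^0$, producing bounds of the form $\xi_n^{-2}\sum_k R(\boldsymbol w_k^0)$ or $\xi_n^{-1}\{\sum_k R(\boldsymbol w_k^0)\}^{1/2}$ --- precisely what Assumption~\ref{ass:lm5} renders negligible. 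Carrying out these reductions with the correct moment bookkeeping (Assumption~\ref{ass:lm3}) is where essentially all of the technical effort lies, but it is a direct adaptation of \cite{Zhao2020}, since the sole structural change --- the explicit form of $\boldsymbol P_k$ --- leaves its symmetry, its spectrum, and the leave-one-out identity intact.
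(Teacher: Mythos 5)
Your proposal is correct and follows essentially the same route as the paper: it adapts Theorem 2 of \cite{Zhao2020} by (a) verifying via Sherman--Morrison that the leave-one-out identity survives the replacement $\lambda_k\boldsymbol I\mapsto\sigma^2\boldsymbol S_k^{-1}$ and (b) establishing the spectral facts about $\boldsymbol P_k$ (eigenvalues in $[0,1]$), which is exactly the content of the paper's Lemma \ref{lemma1}, before running the standard loss/risk decomposition under Assumptions \ref{ass:lm1}--\ref{ass:lm5}. The only cosmetic differences are that you spell out the sandwich argument directly where the paper cites Lemma 1 of \cite{Gao2019}, and you bound the quadratic forms in $\boldsymbol\varepsilon$ directly rather than reproducing Zhao's term-by-term list.
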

	
	\begin{proof}
		The proof is given in Appendix \ref{app:lin_oracle}.
	\end{proof}
	
	Theorem \ref{thm:linear_oracle} states that, under the conditions specified, the model average computed using the Bayesian stacking weights will, asymptotically, perform as well as any other compromise comprising the candidate models and using weights from the same simplex. As stated in the theorem, performance is measured in terms of $L(\boldsymbol{w}) = \lVert \boldsymbol{\mu} - \bar{\boldsymbol{\mu}} \rVert^2$.
	\begin{remark}
		For simplicity, the error variance $\sigma^2$ in the normal likelihood $y_i|\boldsymbol{x}_i, \boldsymbol{\beta} \sim \mathcal{N}(\boldsymbol{x}_i^T \boldsymbol{\beta}, \sigma^2)$ is assumed to be known. However, if $\sigma^2$ is assumed unknown, the posterior expectation of $\boldsymbol \beta$, conditional on $\sigma^2$, is given by $\mathbb E [\boldsymbol \beta | \boldsymbol y, \boldsymbol X, \sigma^2, M_k] = ( \boldsymbol{X}^T\boldsymbol{X} + \sigma^{2} \boldsymbol{S}_k^{-1} )^{-1} \boldsymbol{X}^T \boldsymbol{y}$, which is the same as the posterior mean given above. Hence, the theorem applies when $\sigma^2$ is assumed unknown, but posterior means are derived conditional on $\sigma^2$. 
	\end{remark}

	\begin{remark}
		The theorem allows for $\boldsymbol S_k$ to depend to the sample size $n$, as long as the resulting matrix $\boldsymbol P_k$ satisfies assumption \ref{ass:lm2}. This is the case for the $g$ prior of \cite{Zellner1986}.
	\end{remark}
	
	\subsection{Logistic models} \label{sec:logitoracle}
	
	Suppose that $Y_i \in \{0, 1\}$, with $P(Y_i = 1) = p_i$, and that a model for the probabilities $p_i$ is required. Let $\hat{\boldsymbol p }_k = (\hat{p}_{1, k}, \ldots, \hat{p}_{n, k})^T$ denote a candidate estimate of $\boldsymbol p = (p_1, \ldots, p_n)^T$, and define the corresponding compromise estimator as $\bar{\boldsymbol p } = \sum_k \hat{w}_k \hat{\boldsymbol{p}}_k$. When $\hat p_{i, k} = \operatorname*{expit}(\boldsymbol x_i^T \hat{\boldsymbol \beta}_k)$ are outputs of frequentist logistic regression models, compromises such as $\bar{\boldsymbol p}$ are considered by e.g. \citet{ZhaoZhouYang2019} and \citet{ZuljJin2021}, who establish oracle properties of compromises where the weights are estimated using cross-validation criteria of the form
	
	\begin{align} \label{eq:logit_weight}
		\hat{\boldsymbol{w}}  
		= \operatorname*{argmin}_{\boldsymbol{w} \in \mathcal{W}} \frac{1}{n} \sum_{i = 1}^{n} \left \lVert \sum_k w_k \operatorname{expit}(\boldsymbol{x}_{i}^T \hat{\boldsymbol{\beta}}_{-i, k})  - y_i \right \rVert^2,
	\end{align}
	
	\noindent where $\hat{\boldsymbol \beta}_{-i, m}$ is the leave-one-out coefficient estimate made using a candidate logistic regression model. In particular, Theorem 1 of \citet{ZhaoZhouYang2019} provides an oracle property that asserts asymptotic optimality with respect to the squared loss function, $L(\boldsymbol{w}) = \lVert \boldsymbol{p} - \bar{\boldsymbol{p}} \rVert^2$. The theorem is valid given Assumptions \ref{ass:logit1}-\ref{ass:logit3}, listed below.
	
	\begin{assumption} \label{ass:logit1} 
		For candidate model $k$, there exists a limit $\boldsymbol{\beta}^*_k$ such that $\hat{\boldsymbol{\beta}}_k - \boldsymbol{\beta}^*_k = O_p(n^{-1/2})$.
	\end{assumption}
	
	\noindent Before stating the remaining assumptions,  let $\boldsymbol{p}^*_k = \hat{\boldsymbol{p}}_k |_{\hat{\boldsymbol{\beta}}_k = \boldsymbol{\beta}^*_k}$, $\bar{\boldsymbol{p}}^* = \sum_k w_k \boldsymbol{p}^*_k$, and  $L^*(\boldsymbol{w}) = \lVert \bar{\boldsymbol{p}}^* - \boldsymbol{p} \rVert^2$, with $\xi_n = \inf_{\boldsymbol{w} \in \mathcal{W}} L^*(\boldsymbol{w})$. Then, the last two assumptions are as follows.
	
	\begin{assumption} \label{ass:logit2} 
		$\xi^{-1}_n n^{1/2} = o(1)$. 
	\end{assumption}
	
	\begin{assumption} \label{ass:logit3} 
		For any candidate, $\partial \hat{p}_{i, k} / \partial \hat{\boldsymbol{\beta}}_k |_{\hat{\boldsymbol{\beta}}_k = \tilde{\boldsymbol{\beta}}_{k, i}} = O_p(1)$, uniformly for $i \in \{1, \ldots, n\}$ and for any $\tilde{\boldsymbol{\beta}}_{k, i}$ that lies between $\hat{\boldsymbol{\beta}}_k$ and $\boldsymbol{\beta}^*_k$. 
	\end{assumption}

	Consider applying the oracle property of \citet{ZhaoZhouYang2019} to a compromise between Bayesian candidates. Bayesian logistic regression models assume $p_i = \operatorname{expit}( \boldsymbol x_i^T \boldsymbol \beta )$ and $y_i | \boldsymbol x_i, \boldsymbol \beta \sim \text{Bernoulli}(p_i)$, along with a prior $p(\boldsymbol \beta)$ on the vector of regression coefficients. Now, let $\mathbb E[\boldsymbol \beta | \boldsymbol y, \boldsymbol X, M_k]$ denote the posterior mean of $\boldsymbol \beta$, derived using candidate model $M_k$. Then,  $\hat{p}_{i, k} = \operatorname{expit}(\boldsymbol x_i^T \mathbb E[\boldsymbol \beta | \boldsymbol y, \boldsymbol X, M_k])$ give Bayesian point estimates of $p_i$, and a compromise $\bar{\boldsymbol p}$ can be formed accordingly. The weighting criterion in \eqref{eq:logit_weight} can easily be rewritten to consider such Bayesian candidates, replacing $\hat{\boldsymbol{\beta}}_{-i, m}$ by $\mathbb E[\boldsymbol \beta | \boldsymbol y_{-i}, \boldsymbol X_{-i}, M_k]$. Thus, under the assumptions given above, the oracle property extends to this type of Bayesian stacking. \citet{ZhaoZhouYang2019} discuss the feasibility of the assumptions with reference to frequentist discrete choice models, but the assumptions are yet to be considered from the perspective of Bayesian stacking. Hence, the remainder of this section aims to discuss the assumptions, with a view to motivate applications of the oracle property to Bayesian stacking.
	
	The assumptions are considered standard in the literature on FMA, and some discussion is provided by \cite{ZhaoZhouYang2019}. All three assumptions concern asymptotic properties, meaning that knowledge of the true data generating mechanism is needed to evaluate them, making it difficult to verify the assumptions in practice. However, Assumption \ref{ass:logit2} requires the squared error of the infeasible best compromise to diverge at a rate faster than $\sqrt n$. According to \cite{ZhaoZhouYang2019}, the assumption generally holds for frequentist candidates that are misspecified, and since Bayesian stacking primarily considers $\mathcal M$ complete/open problems, the assumption is considered reasonable. Assumption \ref{ass:logit3}, on the other hand, restricts the derivatives of the estimated probabilities, assuming that they are bounded uniformly. In logistic regression, the derivative of $\hat{\boldsymbol p}$ depends on the data matrix $\boldsymbol X$, and $\hat{\boldsymbol p}$ itself. Since the analysis is done conditional on $\boldsymbol X$, $\hat{\boldsymbol p}$ is the only random component of the derivative, and since $\hat{\boldsymbol p}$ is assumed to converge in Assumption \ref{ass:logit1}, Assumption \ref{ass:logit3} is considered reasonable.
	
	Assumption \ref{ass:logit1} requires the $k$:th model point estimate of $\boldsymbol \beta$ to converge, at a specified rate, to some limit $\boldsymbol \beta^*_k$. The assumption is reasonable when the candidates are frequentist logistic regression models, since, in general, maximum likelihood estimators converge to the parameter vector that minimizes the Kullback-Leibler divergence from the data generating distribution, as discussed by \citet{Akaike1973} and \citet{White1982}. \citet{AndoLi2017} prove the existence of $\boldsymbol{\beta}^*_k$ for high-dimensional generalized linear models. Thus, $\boldsymbol{\beta}^*_k$ may be taken as this limit when the candidates are estimated using maximum likelihood. However, considered from the perspective of Bayesian stacking, the assumption requires $\mathbb E[\boldsymbol \beta | \boldsymbol y, \boldsymbol X, M_k]$ to converge, in the data generating probability and at the specified rate, to a limit defined in a similar way. To date, there are no proofs of such properties for misspecified models with independent but not identically distributed data in the Bayesian setting, but nonetheless, there are theoretical results pointing in that general direction. These are discussed in the paragraphs that follow.
	
	Theorem 9 of \citet{Ramamoorthi2015} asserts that, for each model $M_k$, the posterior distribution of $\boldsymbol{\beta}_k$ is asymptotically carried by an arbitrarily small neighborhood of the limiting value given by \citet{White1982}. However, while the theorem establishes posterior concentration, it does not prove that convergence occurs at the rate required by Assumption \ref{ass:logit1}, and it also does not guarantee that the neighborhood over which the posterior concentrates contains only one point, $\boldsymbol{\beta}^*_k$. Addressing the first issue, \citet{KleijnvanderVaart2012} show in their Lemma 2.1 that, with misspecified models and with $Y_i$ i.i.d., the sequence of posterior distributions converges to a normal distribution at a $\sqrt{n}$ rate given certain assumptions. In the current case, $Y_i$ are independent \emph{but not} identically distributed, meaning that the lemma does not apply. However, it may be taken as an indication that \ref{ass:logit1} is not unfeasible. 
	
	Theorem 9 of \citet{Ramamoorthi2015} is valid under the assumptions presented in Appendix \ref{app:concentration}. The theorem does not necessarily require normal priors for $\boldsymbol \beta$, but is applicable to any prior for which the assumptions are satisfied. Appendix \ref{app:concentration} also includes a section where the conditions of the theorem are verified for the logistic regression model assuming either $\boldsymbol \beta \sim \mathcal N(\boldsymbol 0, \boldsymbol \Sigma)$ or $\boldsymbol \beta \sim \mathcal T(\nu, \boldsymbol \Sigma)$, where the priors have been identified due to their convenience and popularity in practice. The combination of this convergence result and the stability of estimates assumed in \ref{ass:logit3} provide grounds for investigating Bayesian stacking of such candidates in a set of Monte Carlo simulation studies, presented below.
	
	\begin{remark}
		Even though Bayesian stacking was presented as a way of combining posterior means, the above theorem does not require $\hat{\boldsymbol{p}}_k$ to be based on the posterior mean of $\boldsymbol \beta$. In fact, any Bayesian point estimator that satisfies the conditions of the theorem may be used to compute $\hat{\boldsymbol{p}}_k$. As such, the theorem may also apply in situations where, for example, predictions based on maximum aposteriori estimates are combined.
	\end{remark}

	\begin{remark}
		Theorem \ref{thm:linear_oracle} relies on the closed form posterior mean that follows from the assumption that $\boldsymbol \beta \sim \mathcal N (\boldsymbol 0, \boldsymbol S)$. However, the literature on frequentist model averaging also provides oracle properties that do not require a closed form of $\mathbb E[\boldsymbol \beta | \boldsymbol y, \boldsymbol X]$, but only need candidate estimates of $\boldsymbol \beta$ to have well defined limits, and to converge at a certain rate \citep[see e.g.][]{ZuljJin2021}. The formal statements of these convergence conditions are often similar to Assumption \ref{ass:logit1}, meaning that the discussion of the preceding paragraphs may be applied to linear regression models as well. Thus, oracle properties derived under such conditions may well apply to stacking of linear regression models assuming non-normal priors. This paper considers the use of $\mathcal T$ priors in simulation, but does not provide proofs of oracles that apply in such cases. 
	\end{remark}
	
		\section{Monte Carlo experiments} \label{sec:simulations}
	
	The oracle properties stated above indicate that Bayesian stacking is bound to perform at least as well as any single candidate model in $\mathcal M$, and the simulations that follow aim to study the application and benefit of stacking both linear and logistic regression models estimated using finite samples. Throughout the simulation study, the predictive performance of Bayesian stacking is compared to that of the best (in terms of cross-validation error) candidate model in $\mathcal M$. As such, the results of the study should not be read as a comparison of different ways to implement Bayesian stacking, but rather as an example of how Bayesian stacking compares to model selection by cross-validation, which is in line with the oracle properties presented earlier.
	
	\subsection{Linear regression}
	In the linear regression simulations, outcomes  $y_i$ are generated as $Y_i | \boldsymbol{x}_i \sim \mathcal{N}(\boldsymbol{x}_i^T \boldsymbol{\beta}, \sigma^2)$, for $i = 1, \ldots, n \in \{50, 100\}$. The vectors $\boldsymbol x_i$ are generated independently as standard normal variates with 1000 elements, and $\beta_j = c\sqrt{2}j^{-3/2}$, for $j = 1, \ldots, 1000$. Further, $\sigma^2 = 1$, and $c$ is a scalar used to control the theoretical $R^2$ of the generating mechanism. The values of the scaling constants $c$ are chosen so that
	\begin{align*}
		R^2 = \frac{\operatorname*{Var}[c\boldsymbol X \boldsymbol \beta]}{\operatorname*{Var}[c\boldsymbol X \boldsymbol \beta] + \sigma^2} \in \{0.2, \ldots, 0.8\}.
	\end{align*}

	The simulation replicates the design of \citet{Hansen2007}, aiming to mimic the data generating process presented in Equation \eqref{eq:linreg_dgp}. Here, the decaying nature of the coefficients $\beta_j$ is incorporated to simulate a situation where a few variables strongly influence the outcome, whereas a large amount of variables are only weakly associated with $y$. This is believed to reflect a realistic prediction problem, where the presence of many relevant but perhaps weakly associated variables have to be considered in model selection. Naturally, computational limitations prevent the use of infinitely many variables to generate outcomes, which is why the series in \eqref{eq:linreg_dgp} is truncated at $j = 1000$. However, only 14 of the relevant variables are used to fit the candidate models in $\mathcal M$, meaning that all candidates are misspecified by default.

	To study the performance of Bayesian stacking, stacked predictions are formed using two different sets of candidate models, $\mathcal{M}_g$ and $\mathcal{M}_T$. Here,
	\begin{itemize}
		\item $\mathcal{M}_g$ collects regression models assuming $\boldsymbol{\beta}_k \sim \mathcal{N}_p \left[\boldsymbol{0}, \; g_k ( \boldsymbol{X}^T \boldsymbol{X})^{-1}\right]$, with $g_k$ are equally spaced in $ [10^{-2}, \ 10^3],$ for $m = 1, \ldots, 100$, and
		
		\item $\mathcal{M}_T$ consists of candidate models where $\boldsymbol{\beta}_k \sim \mathcal{T}_p \left[ \nu_k, \; \lambda (\boldsymbol X^T \boldsymbol X)^{-1} \right]$, where the degrees of freedom $\nu_k \in \{1, 2, 3, 4, 5, 10, 30\}$, and $\lambda$ is a scaling parameter set to $2.5$ in Design 1 and $0.1$ in Design 2. Since there is no closed form of the posterior mean of $\boldsymbol \beta$, predictions are computed using the corresponding MAP estimates.
	\end{itemize}
	
	\noindent In each replication, a test set of 500 observations is generated and used to evaluate predictive performance, and compare the stacked prediction to the best candidate model. The best candidate, $M_*$, is selected as the model that produces the smallest cross-validation squared error. In all simulations, $R = 10 \ 000$ replications are made for each of the seven values of $c$, and the performance of the compromise estimators is measured as the ratio of the prediction errors of the compromise predictions versus the best model prediction, i.e. 
	\begin{align} \label{eq:ratio_linreg}
		r_\mu = \frac{\frac{1}{R} \sum_{r = 1}^R (\lVert \boldsymbol{X}_{\text{test}} \boldsymbol{\beta} - \sum_k \hat
			w_k \boldsymbol{X}_{\text{test}} \mathbb E[\boldsymbol \beta | \boldsymbol y, \boldsymbol X, M_k] \rVert^2)_r}{\frac{1}{R} \sum_{r = 1}^R (\lVert \boldsymbol{X}_{\text{test}} \boldsymbol{\beta} - \boldsymbol{X}_{\text{test}} \mathbb E[\boldsymbol \beta | \boldsymbol y, \boldsymbol X, M_*]  \rVert^2)_r}.
	\end{align}
	
	\noindent Here, $\boldsymbol{X}_{\text{test}} \mathbb E[\boldsymbol \beta | \boldsymbol y, \boldsymbol X, M_*]$ is the point prediction generated using the best candidate model in the corresponding set of candidates (i.e. either in $\mathcal{M}_g$ or $\mathcal{M}_T$).
	
	\subsubsection{Results} \label{sec:linear_results}
	The results of the linear regression simulations are given in the panels of Figure \ref{fig:linreg}, illustrating $r_\mu$ as a function of the theoretical signal-to-noise ratio. The top panel shows the relative performance of estimators stacking the candidate models in $\mathcal M _g$, while the right hand panel shows the corresponding performance of estimators stacking candidates from $\mathcal M_T$. As can be seen, the curves indicate that, on average, Bayesian stacking performs either better than or on par with model selection using LOO-CV. The averaging of candidate models looks to be particularly fruitful in smaller samples and when the data generating mechanism is noisy, while, on the other hand, the benefits of stacking are not as clear as $n$ increases or as the signal in the data grows stronger.

	\subsection{Logistic regression}
	
	Binary data are simulated as $Y_i \sim \text{Bernoulli}(p_i)$, where $p_i = \operatorname{expit}[c \boldsymbol{x}_i^T \boldsymbol{\beta}]$.	The $\boldsymbol{x}_i$ are sampled as 1000-dimensional vectors from a standard normal distribution, whereas $\boldsymbol{\beta}$ is made up of $\beta_j = (-1)^{j + 1} \sqrt{2}j^{-1.5}$, for $j = 1, \ldots, 1000$. This set-up is copied from \citet{ZuljJin2021}, who modify the DGP of \citet{Hansen2007} for better use with logistic regression (i.e. to avoid generation of too many corner probabilities $p_i \in \{0, 1\}$). As above, the value of $c$ is varied to control the signal-to-noise ratio according to
	
	\begin{align*}
		R^2 = \frac{\operatorname*{Var}[c\boldsymbol{X\beta}]}{\operatorname*{Var}[c\boldsymbol{X\beta}] + \frac{\pi^2}{3}} \in \{0.2, \ldots, 0.7\},
	\end{align*}
	
	\noindent Here, $Y$ is assumed to be a discretization of $Y^*$, a continuous random variable following the standard logistic distribution with variance $\pi^2/3$. 
	
		\begin{figure}[t!]
		\centering
		\begin{subfigure}{0.5\textwidth}
			\centering
			\includegraphics[scale = 0.35]{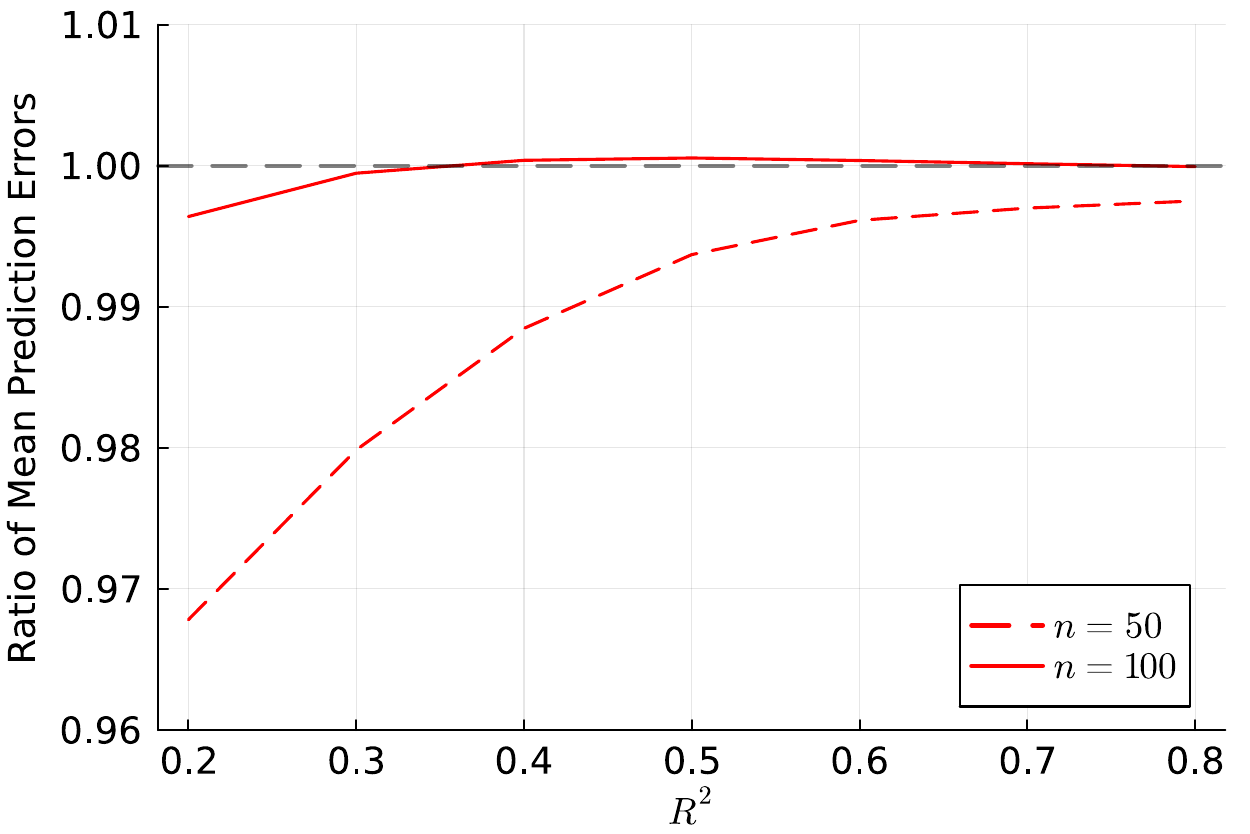}
			\caption{$g$ prior candidates.}
			\label{fig:linear_g}
		\end{subfigure}%
		\begin{subfigure}{0.5\textwidth}
			\centering
			\includegraphics[scale = 0.35]{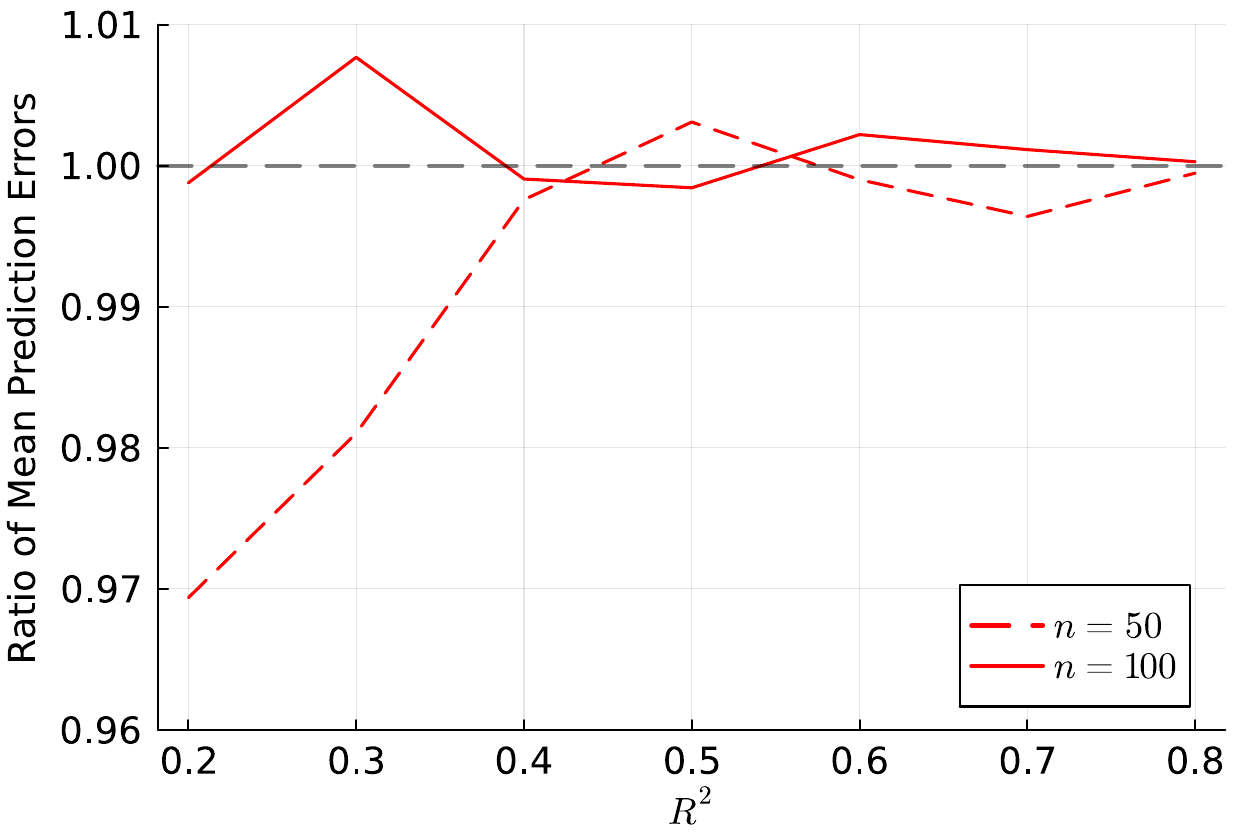}
			\caption{$\mathcal T$ prior candidates.}
			\label{fig:linear_t}
		\end{subfigure}
		
		\caption{Results of simulations made using Zellner's $g$ (left) and the $\mathcal T$ prior (right). A solid line indicates that $n = 50$ observations have been used to fit the candidates, while a dashed line indicates a larger sample size of $n = 100$.}
		\label{fig:linreg}
	\end{figure}
	
	The candidate models fitted all assume a Bernoulli likelihood, and average estimators form compromises between different specifications of some aspect of the assumed candidate priors. As in the linear model simulations, two different sets of candidate are specified.
	
	\begin{itemize}
		\item $\mathcal{M}_\lambda$ collects regression models assuming $\boldsymbol{\beta}_k \sim \mathcal{N}_p \left[\boldsymbol{0}, \; \lambda_k \boldsymbol{I}\right]$, where $\lambda_k$ are equally spaced in $ [10^{-3}, \ 10],$ for $m = 1, \ldots, 50$, and
		
		\item $\mathcal{M}_T$ consists of candidate models where $\boldsymbol{\beta}_k \sim \mathcal{T}_p(\nu_k, \; \lambda \boldsymbol{I}_{p \times p})$, where the degrees of freedom $\nu_k \in \{1, \ldots, 30\}$, where $\lambda = (0.2, 0.2, 0.2, 0.1, 0.1, 0.1, 0.1)$ for the different values of $R^2$.
	\end{itemize}
	
	\noindent Due to the computational intensity of cross-validating Bayesian models, the weights are estimated using 10-fold cross-validation, rather than LOO-CV. Moreover, the point estimates $\hat{\boldsymbol p}_k$ are made using maximum aposteriori estimates rather than posterior means, in order to avoid the computational time required to do posterior sampling in a cross-validation procedure.

	For each $c$, 10 000 replications are performed, and the model average is evaluated against a test set, consisting of 500 observations, as,
	
	\begin{align}\label{eq:ratio_logreg}
		r_p = \frac{\frac{1}{R} \sum_{r = 1}^R (\lVert \boldsymbol{p}  - \bar{\boldsymbol{p}}_{\text{test}} \rVert^2)_r}{\frac{1}{R} \sum_{r = 1}^R (\lVert \boldsymbol{p} - \hat{\boldsymbol p}_{*\text{test}} \rVert^2)_r},
	\end{align}
	
	\noindent where $\hat{\boldsymbol p}_{*\text{test}}$ collects point estimates of validation set probabilities made using the best candidate model in the corresponding set of candidates (i.e. either in $\mathcal{M}_g$ or $\mathcal{M}_T$). The results are displayed in the panels of Figure \ref{fig:logit}.

	\begin{figure}[t!]
		\centering
		\begin{subfigure}{0.5\textwidth}
			\centering
			\includegraphics[scale = 0.35]{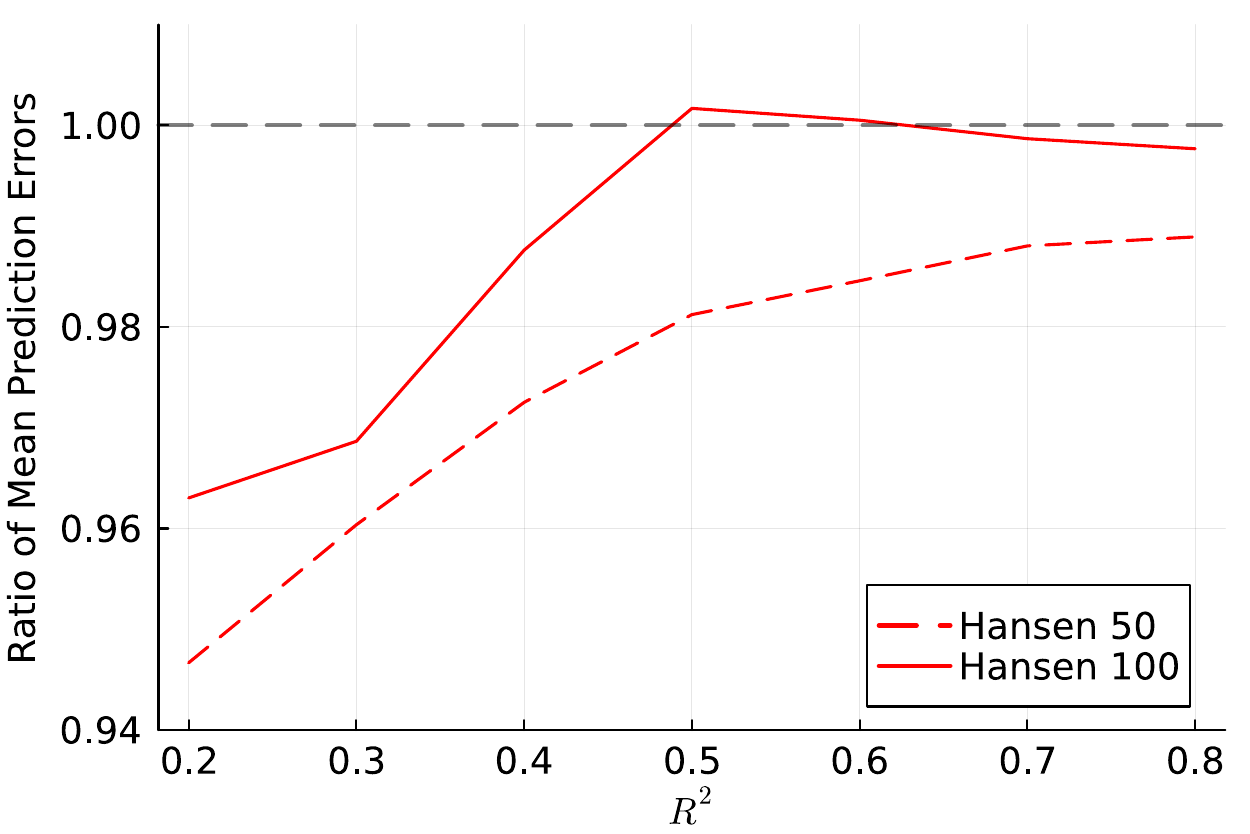}
			\caption{Normal prior candidates}
			\label{fig:logit_g}
		\end{subfigure}%
		\begin{subfigure}{0.5\textwidth}
			\centering
			\includegraphics[scale = 0.35]{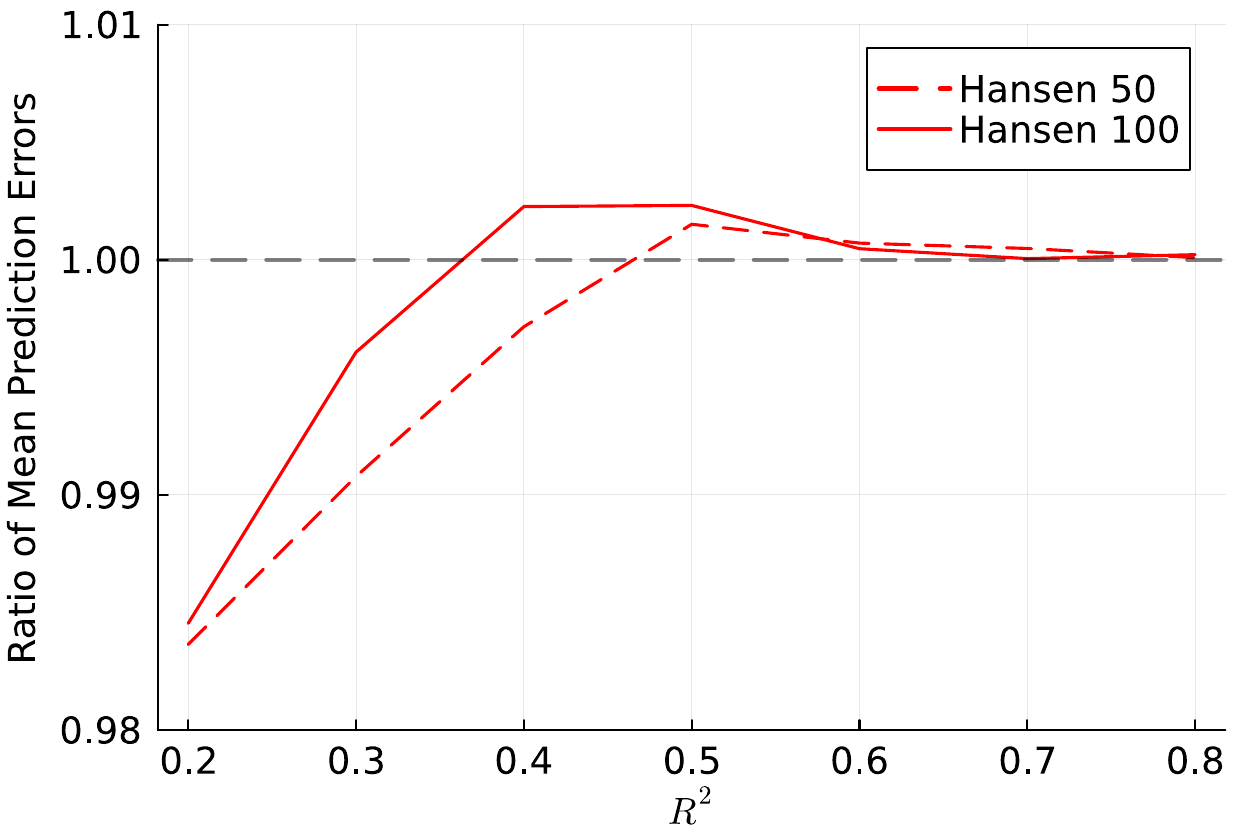}
			\caption{$\mathcal T$ priors candidates}
			\label{fig:logit_t}
		\end{subfigure}
		
		\caption{Results of logistic regression simulations made using normal priors (left) and $\mathcal{T}$ priors (right). A solid line indicates that $n = 50$ observations have been used to fit the candidates, while a dashed line indicates a larger sample size of $n = 100$.}
		\label{fig:logit}
	\end{figure}
	
	\subsubsection{Results}
	
	The results of the simulations studying Bayesian stacking of logistic regression models are similar to those presented in Section \ref{sec:linear_results}. As is illustrated in Figure \ref{fig:logit}, stacking either improves on or performs as well as the best candidate model, and again combining models seems to be especially beneficial when the data are noisy or when the samples are small. Stacking seems to behave similarly in both $\mathcal M_g$ and $\mathcal M_T$.
	
	\section{Conclusions} \label{sec:conclusions}
	
	The paper provides the proof of an oracle property, asserted in Theorem \ref{thm:linear_oracle}, of compromises stacking point predictions made using Bayesian linear regression models, under the prior assumption that regression coefficients are normally distributed. In doing so the paper contributes the first oracle property of its kind for Bayesian stacking, thereby strengthening the case for the use of Bayesian stacking by both Bayesian and frequentist practitioners. In addition, the paper draws from theory on frequentist model averaging, and contributes a discussion of how existing oracle properties may be reinterpreted for application to Bayesian models of a more general form, for example logistic regression or linear regression with non-normal priors. Finally, Bayesian stacking is evaluated in a simulation study, and its performance compared to that of the best candidate model in the stack. The results of the simulation study are intuitive, indicating that stacking is particularly fruitful in small samples and with noisy data. It is reasonable to expect that combining candidate models is more beneficial when data are noisy or sample sizes small, since noise is bound to increase model selection uncertainty. For example, similar tendencies can be observed in both \citet{Zhao2020} and \citet{HansenRacine2012}. 
	
	\subsubsection*{Funding Information}
	Måns Magnusson was funded by The Swedish Research Council Grant 2022-03381.
	
	\newpage
	
	\bibliographystyle{apalike}
	\bibliography{references}
	
	\newpage
	
	\begin{appendices}
		\titleformat{\section} {\normalfont\Large\bfseries}{Appendix~\thesection}{1em}{}
		
		\section{Statement and proof of Lemma 1} \label{app:eigen_lemma}
		\setcounter{equation}{0}
		\renewcommand{\theequation}{\thesection.\arabic{equation}}
		
		This appendix provides a lemma required for the proof of Theorem \ref{thm:linear_oracle}
		
		\begin{lemma} \label{lemma1}
			Let $\boldsymbol{X}_{n \times p}$ be a matrix of full column rank, $\boldsymbol{S}_k$ a positive definite and symmetric $p \times p$ matrix, and  $\sigma^2 > 0$ a constant. Further, define $$\boldsymbol{P}_k = \boldsymbol{X} (\boldsymbol{X}^T \boldsymbol{X} + \sigma^2 \boldsymbol{S}_k^{-1})^{-1} \boldsymbol{X}^T.$$ Then $\lambda_{\text{max}}(\boldsymbol{P}_k) \leq 1$ for every $m \in \left\{1, \ldots, M \right\}$. Moreover, for every $\boldsymbol{w} \in \mathcal{W}$, $\boldsymbol{P}(\boldsymbol{w}) = \sum_k w_k \boldsymbol{P}_k$ is positive semi-definite with $\lambda_{\text{max}}\{\boldsymbol{P}(\boldsymbol{w})\} \leq 1$.
		\end{lemma}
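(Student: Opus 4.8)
The plan is to bound $\lambda_{\text{max}}(\boldsymbol{P}_k)$ by sandwiching each $\boldsymbol{P}_k$ between $\boldsymbol{0}$ and the orthogonal projection onto the column space of $\boldsymbol{X}$, and then to obtain the statement about $\boldsymbol{P}(\boldsymbol{w})$ from a convexity argument that uses $\boldsymbol{w}\in\mathcal{W}$.

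First I would record the basic structural facts. Since $\boldsymbol{X}$ has full column rank, $\boldsymbol{X}^T\boldsymbol{X}$ is symmetric positive definite; and since $\boldsymbol{S}_k>0$ is symmetric, so is $\sigma^2\boldsymbol{S}_k^{-1}$, so $\boldsymbol{A}_k:=\boldsymbol{X}^T\boldsymbol{X}+\sigma^2\boldsymbol{S}_k^{-1}$ is symmetric positive definite and invertible, and $\boldsymbol{P}_k=\boldsymbol{X}\boldsymbol{A}_k^{-1}\boldsymbol{X}^T$ is symmetric positive semi-definite (so $\lambda_{\text{min}}(\boldsymbol{P}_k)\ge 0$ already). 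Then I would apply the order-reversing property of matrix inversion on the cone of positive definite matrices: because $\sigma^2\boldsymbol{S}_k^{-1}>0$, we have $\boldsymbol{A}_k\succeq\boldsymbol{X}^T\boldsymbol{X}\succ0$ in the Loewner order, hence $\boldsymbol{A}_k^{-1}\preceq(\boldsymbol{X}^T\boldsymbol{X})^{-1}$. Conjugation by $\boldsymbol{X}$ preserves the Loewner order, so
\[
  \boldsymbol{0}\preceq\boldsymbol{P}_k=\boldsymbol{X}\boldsymbol{A}_k^{-1}\boldsymbol{X}^T\preceq\boldsymbol{X}(\boldsymbol{X}^T\boldsymbol{X})^{-1}\boldsymbol{X}^T=:\boldsymbol{H}.
\]
The matrix $\boldsymbol{H}$ is the usual hat matrix; it is symmetric and idempotent, so all its eigenvalues lie in $\{0,1\}$ and $\lambda_{\text{max}}(\boldsymbol{H})=1$. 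Monotonicity of $\lambda_{\text{max}}$ under the Loewner order then yields $\lambda_{\text{max}}(\boldsymbol{P}_k)\le\lambda_{\text{max}}(\boldsymbol{H})\le1$.

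For the second claim I would argue pointwise with the Rayleigh quotient. Fix $\boldsymbol{w}\in\mathcal{W}$. Since every $w_k\ge0$ and each $\boldsymbol{P}_k$ is positive semi-definite, $\boldsymbol{P}(\boldsymbol{w})=\sum_k w_k\boldsymbol{P}_k$ is a nonnegative combination of positive semi-definite matrices and is therefore positive semi-definite. For any $\boldsymbol{v}$ with $\lVert\boldsymbol{v}\rVert=1$,
\[
  \boldsymbol{v}^T\boldsymbol{P}(\boldsymbol{w})\boldsymbol{v}=\sum_{k}w_k\,\boldsymbol{v}^T\boldsymbol{P}_k\boldsymbol{v}\le\sum_{k}w_k\,\lambda_{\text{max}}(\boldsymbol{P}_k)\le\sum_{k}w_k=1,
\]
where the last step uses $\sum_k w_k=1$; taking the supremum over such $\boldsymbol{v}$ gives $\lambda_{\text{max}}\{\boldsymbol{P}(\boldsymbol{w})\}\le1$.

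I do not expect a genuine obstacle here, as the lemma is a routine exercise in the Loewner order. The only points that need care are using the correct direction of the implication that $\boldsymbol{A}\succeq\boldsymbol{B}\succ0$ forces $\boldsymbol{B}^{-1}\succeq\boldsymbol{A}^{-1}$, and noting that the map $\boldsymbol{C}\mapsto\boldsymbol{X}\boldsymbol{C}\boldsymbol{X}^T$ is order-preserving rather than order-reversing; both are standard. If one prefers to avoid matrix-order lemmas entirely, I would instead write $\boldsymbol{v}^T\boldsymbol{P}_k\boldsymbol{v}=\boldsymbol{u}^T\boldsymbol{A}_k^{-1}\boldsymbol{u}$ with $\boldsymbol{u}=\boldsymbol{X}^T\boldsymbol{v}$ and bound this directly by $\boldsymbol{u}^T(\boldsymbol{X}^T\boldsymbol{X})^{-1}\boldsymbol{u}=\boldsymbol{v}^T\boldsymbol{H}\boldsymbol{v}\le\lVert\boldsymbol{v}\rVert^2$, which needs only that $\boldsymbol{H}$ is an orthogonal projection.
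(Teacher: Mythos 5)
Your proof is correct, but it takes a genuinely different route from the paper. The paper proves $\lambda_{\max}(\boldsymbol{P}_k)\le 1$ by writing out the singular value decomposition $\boldsymbol{X}=\boldsymbol{U}\boldsymbol{D}\boldsymbol{V}^T$, reducing the spectral norm of $\boldsymbol{P}_k$ to $\bigl[1+\sigma^2\lambda_{\min}\{\boldsymbol{V}^T\boldsymbol{S}_k^{-1}\boldsymbol{V}\boldsymbol{D}_0^{-2}\}\bigr]^{-1}$ and checking that the relevant minimum eigenvalue is nonnegative via a square-root factorization; it then handles $\boldsymbol{P}(\boldsymbol{w})$ by repeated application of the Weyl inequalities. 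You instead sandwich $\boldsymbol{0}\preceq\boldsymbol{P}_k\preceq\boldsymbol{H}$ in the Loewner order, using anti-monotonicity of inversion ($\boldsymbol{A}_k\succeq\boldsymbol{X}^T\boldsymbol{X}\succ 0 \Rightarrow \boldsymbol{A}_k^{-1}\preceq(\boldsymbol{X}^T\boldsymbol{X})^{-1}$) and order-preservation of $\boldsymbol{C}\mapsto\boldsymbol{X}\boldsymbol{C}\boldsymbol{X}^T$, then finish the statement for $\boldsymbol{P}(\boldsymbol{w})$ with a one-line Rayleigh-quotient convexity bound using $w_k\ge 0$, $\sum_k w_k=1$. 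Your argument is shorter and replaces the paper's iterated Weyl-inequality bookkeeping with the standard fact that quadratic forms are linear in the matrix argument, which is a genuine simplification; what the paper's SVD computation buys in exchange is an explicit closed-form expression for $\lVert\boldsymbol{P}_k\rVert_2$, which is more information than the bound alone. One small caveat on your closing remark: the ``avoid matrix-order lemmas entirely'' variant, bounding $\boldsymbol{u}^T\boldsymbol{A}_k^{-1}\boldsymbol{u}$ by $\boldsymbol{u}^T(\boldsymbol{X}^T\boldsymbol{X})^{-1}\boldsymbol{u}$ with $\boldsymbol{u}=\boldsymbol{X}^T\boldsymbol{v}$, still invokes exactly the inverse anti-monotonicity you were trying to avoid, so it is a repackaging rather than an elimination of that step; the main proof, however, stands as written.
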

		
		\begin{proof}
			Suppose $\boldsymbol{X}$ has singular value decomposition $\boldsymbol{X} = \boldsymbol{UDV}^T$ where $\boldsymbol{U}$ is $n \times n$, $\boldsymbol{V}$ is $p \times p$, and both are orthogonal. Moreover, since $\boldsymbol{X}$ has column rank $p$, $\boldsymbol{D}$ is $n \times p$, defined as
			
			\begin{align*}
				\boldsymbol{D} =
				\begin{pmatrix}
					\boldsymbol{D}_0  \\
					\boldsymbol{0} 
				\end{pmatrix},
			\end{align*}
			
			\noindent where $\boldsymbol{D}_0$ is a $p \times p$ diagonal matrix with positive diagonal values, and the $\boldsymbol{0}$ matrix is $(n - p) \times p$. It follows that
			
			\begin{align*}
				\left(\boldsymbol{X}^T \boldsymbol{X} + \sigma^2 \boldsymbol{S}_k^{-1} \right)^{-1} 
				& = \left(\boldsymbol{V D}^T\boldsymbol{U}^T  \boldsymbol{UD V}^T + \sigma^2 \boldsymbol{S}_k^{-1} \right)^{-1} \\
				& = \left(\boldsymbol{V D}^T \boldsymbol{D V}^T + \sigma^2 \boldsymbol{S}_k^{-1} \right)^{-1} \\
				& = \left\{\boldsymbol{V}\left(\boldsymbol{D}^T \boldsymbol{D} + \sigma^2 \boldsymbol{V}^T\boldsymbol{S}_k^{-1} \boldsymbol{V} \right) \boldsymbol{V}^T\right\}^{-1} \\
				& = \left(\boldsymbol{V}^T\right)^{-1} \left(\boldsymbol{D}_0^2 + \sigma^2 \boldsymbol{V}^T\boldsymbol{S}_k^{-1} \boldsymbol{V} \right)^{-1} \boldsymbol{V}^{-1} \\
				& = \boldsymbol{V} \left(\boldsymbol{D}_0^2 + \sigma^2 \boldsymbol{V}^T\boldsymbol{S}_k^{-1} \boldsymbol{V} \right)^{-1} \boldsymbol{V}^T,
			\end{align*}
			
			\noindent and thus that
			
			\begin{align*}
				\boldsymbol{P}_k & = \boldsymbol{X} \left(\boldsymbol{X}^T \boldsymbol{X} + \sigma^2 \boldsymbol{S}_k^{-1} \right)^{-1} \boldsymbol{X}^T \\
				& = \boldsymbol{UD V}^T  \boldsymbol{V} \left(\boldsymbol{D}_0^2 + \sigma^2 \boldsymbol{V}^T\boldsymbol{S}_k^{-1} \boldsymbol{V} \right)^{-1} \boldsymbol{V}^T \boldsymbol{V D}^T\boldsymbol{U}^T  \\
				& = \boldsymbol{UD} \left(\boldsymbol{D}_0^2 + \sigma^2 \boldsymbol{V}^T\boldsymbol{S}_k^{-1} \boldsymbol{V} \right)^{-1} \boldsymbol{D}^T\boldsymbol{U}^T.
			\end{align*}
			
			Let $\lVert \boldsymbol{A} \rVert_{2} = \lambda^{1/2}_{\text{max}}(\boldsymbol{A}^T \boldsymbol{A})$ denote the spectral norm of the matrix $\boldsymbol{A}$. Also, note that for symmetric $\boldsymbol{A}$, $\lVert \boldsymbol{A} \rVert_{2} = \lambda_{\text{max}}(\boldsymbol{A})$.  Then, using the above decomposition of $\boldsymbol{P}_k$,
			
			\begin{align*}
				\lVert \boldsymbol{P}_k \rVert_{2} 
				& = \Big \lVert \boldsymbol{UD} \left(\boldsymbol{D}_0^2 + \sigma^2 \boldsymbol{V}^T\boldsymbol{S}_k^{-1} \boldsymbol{V} \right)^{-1} \boldsymbol{D}^T\boldsymbol{U}^T \Big \rVert_{2} \\
				& = \lambda_{\text{max}} \left\{ \boldsymbol{UD} \left(\boldsymbol{D}_0^2 + \sigma^2 \boldsymbol{V}^T\boldsymbol{S}_k^{-1} \boldsymbol{V} \right)^{-1} \boldsymbol{D}^T\boldsymbol{U}^T \right\} \\
				& =  \lambda_{\text{max}} \left\{  \boldsymbol{D}^T\boldsymbol{U}^T \boldsymbol{UD} \left(\boldsymbol{D}_0^2 + \sigma^2 \boldsymbol{V}^T\boldsymbol{S}_k^{-1} \boldsymbol{V} \right)^{-1} \right\} \\
				& =  \lambda_{\text{max}} \left\{  \boldsymbol{D}_0^2 \left(\boldsymbol{D}_0^2 + \sigma^2 \boldsymbol{V}^T\boldsymbol{S}_k^{-1} \boldsymbol{V} \right)^{-1} \right\} \\
				& =  \lambda_{\text{max}} \left\{  \boldsymbol{D}_0^2 \left[ (\boldsymbol I + \sigma^2 \boldsymbol{V}^T\boldsymbol{S}_k^{-1} \boldsymbol{V}\boldsymbol{D}_0^{-2}) \boldsymbol{D}_0^2   \right]^{-1} \right\} \\
				& = \lambda_{\text{max}} \left\{ \left[\boldsymbol{I}+ \sigma^2 \boldsymbol{V}^T\boldsymbol{S}_k^{-1} \boldsymbol{V} \boldsymbol{D}_0^{-2} \right]^{-1} \right\} \\
				& = \lambda_{\text{min}}^{-1} \left\{ \boldsymbol{I}+ \sigma^2 \boldsymbol{V}^T\boldsymbol{S}_k^{-1} \boldsymbol{V} \boldsymbol{D}_0^{-2}  \right\} \\
				& = \bigg[1 + \sigma^2 \lambda_{\text{min}} \left\{ \boldsymbol{V}^T\boldsymbol{S}_k^{-1} \boldsymbol{V} \boldsymbol{D}_0^{-2}  \right\}\bigg]^{-1} \\
				& \leq 1,
			\end{align*}
			
			\noindent where the inequality follows from the argument given in the ensuing paragraph.
			
			During the remainder of this proof, let $\boldsymbol{A} = \boldsymbol{V}^T\boldsymbol{S}_k^{-1} \boldsymbol{V}$ and $\boldsymbol{B} = \boldsymbol{D}_0^{-2}$. Since $\boldsymbol{A}$ is symmetric, it admits the decomposition $\boldsymbol{A} = \boldsymbol{A}^{1/2} \boldsymbol{A}^{1/2}$, where $\boldsymbol{A}^{1/2}$ is also symmetric. Moreover, $\boldsymbol{B}$ is $p \times p$, positive definite, and diagonal with $\boldsymbol{B}^{-1} > 0$. Thus, $\boldsymbol{B}^{-1}$ admits the similar decomposition $\boldsymbol{B}^{-1} = \boldsymbol{B}^{-1/2} \boldsymbol{B}^{-1/2}$. As such,
			
			\begin{align*}
				\lambda_{\text{min}} \left\{ \boldsymbol{V}^T\boldsymbol{S}_k^{-1} \boldsymbol{V} \boldsymbol{D}_0^{-2}  \right\} 
				& = \lambda_{\text{min}} \left\{ \boldsymbol{A} \boldsymbol{B}^{-1}  \right\} 
				= \lambda_{\text{min}} \left\{ \boldsymbol{A} \boldsymbol{B}^{-1/2} \boldsymbol{B}^{-1/2}  \right\} \\
				& = \lambda_{\text{min}} \left\{ \boldsymbol{B}^{-1/2} \boldsymbol{A} \boldsymbol{B}^{-1/2}  \right\} \\
				& = \lambda_{\text{min}} \left\{ \boldsymbol{B}^{-1/2} \boldsymbol{A}^{1/2} \boldsymbol{A}^{1/2} \boldsymbol{B}^{-1/2}  \right\} \\
				& = \lambda_{\text{min}} \left\{ \left(\boldsymbol{A}^{1/2} \boldsymbol{B}^{-1/2}\right)^T  \boldsymbol{A}^{1/2} \boldsymbol{B}^{-1/2} \right\} \\
				& \geq 0.
			\end{align*}
			
			\noindent Lastly, $\sigma^2 > 0$, so $\sigma^2 \lambda_{\text{min}} \left\{ \boldsymbol{V}^T\boldsymbol{S}_k^{-1} \boldsymbol{V} \boldsymbol{D}_0^{-2} \right\} \geq 0$, and
			
			\begin{align*}
				0 \leq \bigg[1 + \sigma^2 \lambda_{\text{min}} \left\{ \boldsymbol{V}^T\boldsymbol{S}_k^{-1} \boldsymbol{V} \boldsymbol{D}_0^{-2}  \right\}\bigg]^{-1} 
				\leq 1.
			\end{align*}
			
			Now, let $\lambda_j(\boldsymbol A)$ denote the $j$:th largest eigenvalue of some matrix $\boldsymbol A$. Then, by the Weyl inequalities, it follows that
			
			\begin{align*}
				\lambda_{i+j-1} \left( \sum_{m = 1}^M w_k \boldsymbol P_k \right)  
				\leq \lambda_i(w_1 \boldsymbol P_1) + \lambda_{j} \left( \sum_{m = 2}^M w_k \boldsymbol P_k \right)  
				= w_1 \lambda_i (\boldsymbol P_1)  + \lambda_{j} \left( \sum_{m = 2}^M w_k \boldsymbol P_k \right).
			\end{align*}
			
			\noindent Repeated application of the same inequality gives
			
			\begin{align*}
				\lambda_{j} \left( \sum_{m = 2}^M w_k \boldsymbol P_k \right)  
				\leq \lambda_1(w_2 \boldsymbol P_2) + \lambda_{j} \left( \sum_{m = 3}^M w_k \boldsymbol P_k \right)  
				= w_2 \lambda_1 (\boldsymbol P_2)  + \lambda_{j} \left( \sum_{m = 3}^M w_k \boldsymbol P_k \right),
			\end{align*}
			
			\noindent and
			
			\begin{align*}
				\lambda_{j} \left( \sum_{m = 3}^M w_k \boldsymbol P_k \right)  
				\leq \lambda_1(w_3 \boldsymbol P_3) + \lambda_{j} \left( \sum_{m = 4}^M w_k \boldsymbol P_k \right)  
				= w_3 \lambda_1 (\boldsymbol P_3)  + \lambda_{j} \left( \sum_{m = 4}^M w_k \boldsymbol P_k \right),
			\end{align*}
			
			\noindent repeating in the same manner until
			
			\begin{eqnarray*}
				& & \lambda_{j} \left( w_{M-1} \boldsymbol P_{M-1} +  w_{M} \boldsymbol P_{M}  \right)  
				\leq \lambda_1(w_{M-1} \boldsymbol P_{M-1}) + \lambda_{j} \left( w_k \boldsymbol P_k \right)  \\
				&=& w_{M-1} \lambda_1 (\boldsymbol P_{M-1})  + \lambda_{j} \left(w_k \boldsymbol P_k \right).
			\end{eqnarray*}
			
			\noindent Thus,
			
			\begin{align*}
				\lambda_{i+j-1} \left( \sum_{m = 1}^M w_k \boldsymbol P_k \right) 
				\leq w_1 \lambda_i (\boldsymbol P_1) + w_{M} \lambda_j (\boldsymbol P_{M}) + \sum_{m = 2}^{M-1} w_k \lambda_1(\boldsymbol P_k).
			\end{align*}
			
			Finally, let $\lambda^* = \max_k \{\lambda_{\text{max}} (\boldsymbol P_k) \}$. Then, 
			
			\begin{align*}
				\lambda_{i+j-1} \left( \sum_{m = 1}^M w_k \boldsymbol P_k \right) 
				\leq w_1 \lambda^*+ w_{M-1} \lambda^* + \sum_{m = 2}^{M-1} w_k \lambda^*
				= \lambda^* \sum_{m = 1}^{M} w_k = \lambda^* = 1.
			\end{align*}
			
			The Weyl inequalities can be used similarly to show that the eigenvalues of $\sum_k w_k \boldsymbol P_k$ are bounded below by 0, completing the proof of the lemma.
		\end{proof}
		
		\section{Proof of Theorem 1} \label{app:lin_oracle}
		
		\setcounter{theorem}{0}
		\setcounter{equation}{0}
		\renewcommand{\theequation}{\thesection.\arabic{equation}}
		
	 	\textit{Note: The proof that follows is based on the proof of Theorem 2 in \cite{Zhao2020} to a large extent, with some generalizing additions. As such, the proof given below is a sketch of the proof provided by \citet{Zhao2020}, where the alterations made are entered chronologically.}
	
		\vspace{0.5cm}
		
		\noindent Let $\boldsymbol{P}_k = \boldsymbol{X} \left(\boldsymbol{X}^T \boldsymbol{X} + \sigma^2 \boldsymbol{S}_k^{-1} \right)^{-1} \boldsymbol{X}^T$,  $\boldsymbol{A}_k = \boldsymbol{I}_{n \times n} - \boldsymbol{P}_k$, and let $\boldsymbol{Q}_k$ be the $n \times n$ diagonal matrix with elements $Q^K_{ii} = P^K_{ii} /(1 - P^K_{ii})$, where $P^K_{ii}$ are diagonal elements of $\boldsymbol{P}_k$. Moreover, let $\boldsymbol{B}_k = \boldsymbol{Q}_k \boldsymbol{A}_k$, and define the weighted averages
		
		\begin{align*}
			\boldsymbol{A}(\boldsymbol{w}) = \sum_{k= 1}^{K} w_k \boldsymbol{A}_k \ \text{ and } \ \boldsymbol{B}(\boldsymbol{w}) = \sum_{k= 1}^{K} w_k \boldsymbol{B}_k.
		\end{align*}
		
		\noindent Considering Equation (6) of \cite{Zhao2020}, the equality extends without issue to the case where $k_k \boldsymbol{I}_{n \times n}$ is replaced by the positive semidefinite matrix $\boldsymbol{S}_k$. To see this, note that $\boldsymbol{X}^T\boldsymbol{X} = \sum_{i = 1}^n \boldsymbol{x}_i \boldsymbol{x}_i^T$, and that $\boldsymbol{X}_{-j} \boldsymbol{X}_{-j}^T = \sum_{i \neq j} \boldsymbol{x}_i \boldsymbol{x}_i^T$. It follows that $\boldsymbol{X}_{-j} \boldsymbol{X}_{-j}^T = \boldsymbol{X}^T\boldsymbol{X} - \boldsymbol{x}_j \boldsymbol{x}_j^T$, and that
		
		\begin{align*}
			& \left( \boldsymbol{X}_{-i}  \boldsymbol{X}_{-i}^T + \sigma^2 \boldsymbol{S}_k^{-1} \right)^{-1}  \\
			& \hspace{0.25cm} =  \left( \boldsymbol{X}^T\boldsymbol{X} + \sigma^2 \boldsymbol{S}_k^{-1} + (- \boldsymbol{x}_i) \boldsymbol{x}_i^T \right)^{-1} \\
			& \hspace{0.25cm}  = \left(\boldsymbol{X}^T\boldsymbol{X} + \sigma^2 \boldsymbol{S}_k^{-1} \right)^{-1} + 
			\frac{\left(\boldsymbol{X}^T\boldsymbol{X} + \sigma^2 \boldsymbol{S}_k^{-1} \right)^{-1}\boldsymbol{x}_i \boldsymbol{x}_i^T \left(\boldsymbol{X}^T\boldsymbol{X} + \sigma^2 \boldsymbol{S}_k^{-1} \right)^{-1}}{1 - \boldsymbol{x}_i^T \left(\boldsymbol{X}^T\boldsymbol{X} + \sigma^2 \boldsymbol{S}_k^{-1} \right)^{-1} \boldsymbol{x}_i},
		\end{align*}
		
		\noindent by application of property (A.2.4f) in \cite{Mardia1979}. Thus, the decomposition
		
		\begin{align*}
			\boldsymbol{y} - \sum_{k= 1}^{K} w_k \tilde{\boldsymbol{\mu}}_k = \left[\boldsymbol{B}(\boldsymbol{w}) + \boldsymbol{A}(\boldsymbol{w}) \right] \boldsymbol{y}
		\end{align*}
		
		\noindent follows.
		
		Set $\boldsymbol{M}(\boldsymbol{W}) = \boldsymbol{A}(\boldsymbol{w})\boldsymbol{B}(\boldsymbol{w}) + \boldsymbol{A}^T(\boldsymbol{w})\boldsymbol{B}(\boldsymbol{w}) + \boldsymbol{B}^T(\boldsymbol{w})\boldsymbol{B}(\boldsymbol{w})$. It follows, as in \cite{Zhao2020}, that the cross validation criterion
		
		\begin{align*}
			\left\lVert \boldsymbol{y} - \sum_{k= 1}^{K} w_k \tilde{\boldsymbol{\mu}} \right\rVert ^2
			= L(\boldsymbol{w}) + r(\boldsymbol{w}) + \boldsymbol{\varepsilon}^T\boldsymbol{\varepsilon},
		\end{align*}
		
		\noindent where
	
		\begin{align*}
			r(\boldsymbol{w})  = 2 \boldsymbol{\mu} \boldsymbol{A}(\boldsymbol{w}) \boldsymbol{\varepsilon} - 2 \boldsymbol{\varepsilon}^T \boldsymbol{P}(\boldsymbol{w}) \boldsymbol{\varepsilon} 
			+ \boldsymbol{\mu}^T \boldsymbol{M}(\boldsymbol{w}) \boldsymbol{\mu} + 2\boldsymbol{\mu}^T \boldsymbol{M}(\boldsymbol{w}) \boldsymbol{\varepsilon} + \boldsymbol{\varepsilon}^T \boldsymbol{M}(\boldsymbol{w}) \boldsymbol{\varepsilon},
		\end{align*}
		
		\noindent with $\boldsymbol{P}(\boldsymbol{w}) = \sum_k w_k \boldsymbol{P}_k$. Since $\boldsymbol{w}$ is not a function of $\boldsymbol{\varepsilon}^T \boldsymbol{\varepsilon}$, \citet{Zhao2020} assert, based on Lemma 1 in \cite{Gao2019}, that showing
		
		\begin{align} \label{eq:oracle_cond1}
			\sup_{\boldsymbol{w} \in \mathcal{W}} \frac{r(\boldsymbol{w})}{R(\boldsymbol{w})} = o_p(1),
		\end{align}
		
		\noindent and
		
		\begin{align} \label{eq:oracle_cond2}
			\sup_{\boldsymbol{w} \in \mathcal{W}} \left \lvert \frac{L(\boldsymbol{w})}{R(\boldsymbol{w})} -1 \right  \rvert = o_p(1)
		\end{align}
		
		\noindent is sufficient to prove Theorem 3.1.
		
		Concerning condition \eqref{eq:oracle_cond1}, \cite{Zhao2020} show that, under the additional assumption that $\lambda_{\text{max}}(\boldsymbol{\Omega}) = \bar{\sigma}^2 < \infty$ a.s., 
		
		\begin{itemize}
			\item $\sup_{\boldsymbol{w} \in \mathcal{W}} R^{-1}(\boldsymbol{w}) |\boldsymbol{\mu} \boldsymbol{A}(\boldsymbol{w}) \boldsymbol{\varepsilon}| = o_p(1)$,
			
			\item $\sup_{\boldsymbol{w} \in \mathcal{W}} R^{-1}(\boldsymbol{w})  \lvert \boldsymbol{\varepsilon}^T \boldsymbol{P}(\boldsymbol{w}) \boldsymbol{\varepsilon} - \tr{\boldsymbol{P}(\boldsymbol{w}) \boldsymbol{\Omega}} \rvert = o_p(1)$,
			
			\item $\sup_{\boldsymbol{w} \in \mathcal{W}} R^{-1}(\boldsymbol{w})  \lvert \tr{\boldsymbol{P}(\boldsymbol{w}) \boldsymbol{\Omega}} \rvert = o(1)$, a.s., 
			
			\item $\sup_{\boldsymbol{w} \in \mathcal{W}} R^{-1}(\boldsymbol{w})  \lvert \boldsymbol{\mu}^T \boldsymbol{M}(\boldsymbol{w}) \boldsymbol{\mu} \rvert = o(1)$, a.s., 
			
			\item $\sup_{\boldsymbol{w} \in \mathcal{W}} R^{-1}(\boldsymbol{w})  \lvert \boldsymbol{\mu}^T \boldsymbol{M}(\boldsymbol{w}) \boldsymbol{\varepsilon} \rvert = o_p(1)$,
			
			\item $\sup_{\boldsymbol{w} \in \mathcal{W}} R^{-1}(\boldsymbol{w})  \lvert \boldsymbol{\varepsilon}^T \boldsymbol{M}(\boldsymbol{w}) \boldsymbol{\varepsilon} \rvert = o_p(1)$,
		\end{itemize}
		
		\noindent from which \eqref{eq:oracle_cond1} follows. The same procedure is valid for the current proof.
		
		Regarding \eqref{eq:oracle_cond2}, note that
	
		\begin{align*}
			R(\boldsymbol{w}) & = \mathbb{E} \left[ \lVert \boldsymbol{\mu} - \bar{\boldsymbol{\mu}} \rVert^2 \Big \vert  \boldsymbol{Z}\right]
			= \mathbb{E} \left[ \Big\lVert \boldsymbol{\mu} - \boldsymbol{P}(\boldsymbol{w}) \boldsymbol{y} \Big\rVert^2 \ \Big \vert  \boldsymbol{Z}\right] \\
			& = \mathbb{E} \left\{\Big\lVert \left[\boldsymbol{\mu} - \boldsymbol{P}(\boldsymbol{w}) \boldsymbol{\mu}\right]  - \left[\boldsymbol{P}(\boldsymbol{w}) (\boldsymbol{y} - \boldsymbol{\mu}) \right] \Big\rVert^2 \ \Big \vert  \boldsymbol{Z}\right\} \\
			& = \mathbb{E} \left[ \Big \rVert \boldsymbol{A}(\boldsymbol{w})\boldsymbol{\mu}\Big \rVert^2 \ \Big \vert  \boldsymbol{Z} \right] 
			- 2\mathbb{E} \left[ \boldsymbol{\mu}^T\boldsymbol{A}(\boldsymbol{w}) \boldsymbol{P}(\boldsymbol{w})\boldsymbol{\varepsilon}\ \Big \vert  \boldsymbol{Z} \right]
			+ \mathbb{E} \left[\Big \rVert \boldsymbol{P}(\boldsymbol{w})\boldsymbol{\varepsilon}\Big \rVert^2 \ \Big \vert \boldsymbol{Z} \right] \\ 
			& = \Big \rVert \boldsymbol{A}(\boldsymbol{w})\boldsymbol{\mu}\Big \rVert^2 + \tr{\boldsymbol{P}(\boldsymbol{w}) \boldsymbol{\Omega} \boldsymbol{P}(\boldsymbol{w})},
		\end{align*}
		
		\noindent and, that,
		
		\begin{align*}
			\lvert L(\boldsymbol{w}) - R(\boldsymbol{w}) \rvert 
			= \big \rvert \big \rVert \boldsymbol{P}(\boldsymbol{w})\boldsymbol{\varepsilon}\big \rVert^2 - \tr{\boldsymbol{P}(\boldsymbol{w}) \boldsymbol{\Omega} \boldsymbol{P}(\boldsymbol{w})} - 2\boldsymbol{\mu}^T\boldsymbol{A}(\boldsymbol{w}) \boldsymbol{P}(\boldsymbol{w})\boldsymbol{\varepsilon} \big \lvert.
		\end{align*}
		
		\noindent Thus, showing \eqref{eq:oracle_cond2} reduces to proving
		
		\begin{itemize}
			\item $\sup_{\boldsymbol{w} \in \mathcal{W}} R^{-1}(\boldsymbol{w}) \big \rVert \boldsymbol{P}(\boldsymbol{w})\boldsymbol{\varepsilon}\big \rVert^2 = o_p(1)$,
			
			\item $\sup_{\boldsymbol{w} \in \mathcal{W}} R^{-1}(\boldsymbol{w}) \tr{\boldsymbol{P}(\boldsymbol{w}) \boldsymbol{\Omega} \boldsymbol{P}(\boldsymbol{w})} = o(1)$, a.s., and
			
			\item $\sup_{\boldsymbol{w} \in \mathcal{W}} R^{-1}(\boldsymbol{w}) \boldsymbol{\mu}^T\boldsymbol{A}(\boldsymbol{w}) \boldsymbol{P}(\boldsymbol{w})\boldsymbol{\varepsilon} = o_p(1)$.
		\end{itemize}
		
		\noindent Using Lemma \ref{lemma1}, the three statements are proven as in \cite{Zhao2020}. This completes the proof.

		\section{Posterior Concentration} \label{app:concentration}
		
		\setcounter{theorem}{0}
		\setcounter{equation}{0}
		\renewcommand{\theequation}{\thesection.\arabic{equation}}
		
		For ease of translation, this appendix follows the notational framework set up by \citet{Ramamoorthi2015}. Now, suppose the data generating process is unknown. That is, suppose there is a true density function $f_{0\boldsymbol{x}}$ such that $Y | \boldsymbol{x} \sim f_{0\boldsymbol{x}}$, and let $\mathbb{E}_{\boldsymbol{x}}$ denote the expectation operator with respect to the density $f_{0\boldsymbol{x}}$. Moreover, let $\mathcal{F} = \{f_t : t \in [-K, K], \ K \in \mathbb{R}^+\}$ be a family of densities, $\Theta$ be a class of continuous functions mapping from $\mathcal{X}$ to $[-K, K]$, and assume that the specified models are of the form $Y_i \sim f_{\theta_{\boldsymbol{x}_i}}$, where $\theta_{\boldsymbol{x}_i} = \theta(\boldsymbol{x}_i)$.

		Let $\Pi(\cdot)$ be a prior on the parameter space $\Theta$, and $\Pi(A | \boldsymbol{y}, \boldsymbol{X})$ denote the posterior probability of a set $A$ given the outcomes $\boldsymbol{y}$ and covariates $\boldsymbol{X}$. Moreover, let $d(\theta_1, \theta_2) = \sup_{\boldsymbol{x} \in \mathcal{X}} |\theta_1(\boldsymbol{x}) - \theta_2(\boldsymbol{x})|$, $\theta^*$ be the minimizer of $\mathbb{E}_{\boldsymbol x}  [\log \left(f_{0 \boldsymbol x} / f_{\theta_x}\right)]$, and $U^c = \left\{\theta \in \Theta : d(\theta, \theta^*) > \epsilon \right\}$. Then, \citet{Ramamoorthi2015} proves that $\Pi(U^c | \boldsymbol{y}, \boldsymbol{X}) \rightarrow 0$, almost surely in $P_0$, where, $P_0$ is the infinite product measure $f_{0 \boldsymbol x_1} \times f_{0 \boldsymbol x_2} \times \cdots$. The proof is valid under Assumptions \ref{ass:Rama1} -- \ref{ass:Rama5}.

		\begin{enumerate}[label = AB.\arabic*, font={\bfseries}, leftmargin = 2 cm]
			
			\item The covariate space $\mathcal{X}$ is compact with respect to a norm $\lVert \cdot \rVert$ and $\Theta$ is a compact subset of continuous functions from $\mathcal{X} \rightarrow \mathbb{R}$  endowed with the sup-norm metric $d(\theta_1, \theta_2) = \sup_{\boldsymbol{x} \in \mathcal{X}} |\theta_1(\boldsymbol{x}) - \theta_2(\boldsymbol{x})|$. \label{ass:Rama1}
			
			\item For any given $\boldsymbol{x}_0 \in \mathcal{X}$ and $\delta' > 0$, let $A_{\boldsymbol{x}_0, \delta'} = \{\boldsymbol{x} : \lVert \boldsymbol{x} - \boldsymbol{x}_0 \rVert < \delta'\}$ and $I_{A_{\boldsymbol{x}_0, \delta'}}(\boldsymbol{x})$ be the indicator which is 1 when $\boldsymbol{x}_0 \in A_{\boldsymbol{x}_0, \delta'}$ and 0 otherwise. Then, $\kappa (\boldsymbol{x}_0, \delta') = \operatorname{liminf}_{n \geq 1} \frac{1}{n} \sum_{i = 1}^{n}I_{A_{\boldsymbol{x}_0, \delta'}}(\boldsymbol{x}) > 0$. \label{ass:Rama2}
			
			\item  $\exists \ \theta^* \in \Theta$ such that $\theta^*_{\boldsymbol{x}} = \operatorname{argmin}_{t \in [-K, K]} \mathbb{E}_{\boldsymbol{x}} \left[ \log \left( \frac{f_{0 \boldsymbol{x}}}{f_t}\right)\right], \forall \boldsymbol{x} \in \mathcal{X}$ and $\theta^*$ is in the sup-norm support of $\Pi$. \label{ass:Rama3}
			
			\item $\mathbb{E}_{\boldsymbol{x}} \left[ \log \left( \frac{f_t}{f_{t'}} \right) \right]$ and, for every $\alpha \in (0, 1)$, $\mathbb{E}_{\boldsymbol{x}} \left[  \left( \frac{f_t}{f_{t'}} \right)^{\alpha} \right]$ are continuous function in $(\boldsymbol{x}, t, t' \in \mathcal{X} \times [-K, K]^2$ and $\mathbb{E}_{\boldsymbol{x}} \left[ \log^2 \left( \frac{f_t}{f_{t'}} \right) \right]$ is uniformly bounded for $(\boldsymbol{x}, t, t') \in \mathcal{X} \times [-K, K]^2$. \label{ass:Rama4}
			
			\item For any $\epsilon > 0, \ \exists  \ \delta \in (0, 1)$ such that 
			
			\begin{align*}
				\left\{t \in [-K, K]: \mathbb{E}_{\boldsymbol{x}} \left[ \log \frac{f_{\theta^*_{\boldsymbol{x}}}}{f_t} \right] < \delta  \right\} \subseteq \{t : |t - \theta_{\boldsymbol{x}}^*| < \epsilon \}, \forall \boldsymbol{x} \in \mathcal{X}.
			\end{align*} \label{ass:Rama5}
		\end{enumerate}
		
		\subsection*{Logistic regression}
		
		In the current paper, $Y_i$ is modeled using finite dimensional logistic regression, meaning that 
		
		\begin{align*}
			\mathcal{F} = \left\{ [\operatorname{expit}(t)]^y [1 - \operatorname{expit}(t)]^{1-y}  : t \in [-K, K] \right\},
		\end{align*}
		
		\noindent where $\operatorname{expit}(t) = (1 + e^{-t})^{-1}$ and $\theta_{\boldsymbol {x}} = \boldsymbol {x}^T \boldsymbol {\beta}$. Clearly, this set-up represents a case of model misspecification, since $f_{0\boldsymbol {x}}$ is assumed not to belong to $\mathcal{F}$. 
		
		According to \citet{Ramamoorthi2015}, an example of $\Theta$ that satisfies Assumption \ref{ass:Rama1} is any class of smooth functions defined on a compact set $\mathcal{X}$ and parameterized by finitely many parameters that also take values on some compact set. In such cases, the sup-norm is also equivalent to the Euclidean norm. Here, all candidates depend on finitely many parameters and, in practice, they realistically belong to some compact set. Moreover, the set $\mathcal{X}$ can often be closed and bounded and thus also compact, without much limitation. Assumption \ref{ass:Rama2} concerns only the covariates, and as such does not influence whether the theorem can be applied to logistic regression models or not.
		
		For Assumption \ref{ass:Rama3}, $\theta^*$ belonging to the sup-norm support of $\Pi$ means that
		
		\begin{align*}
			\Pi \left(\theta : \sup_{\boldsymbol {x} \in \mathcal{X}} |\theta(\boldsymbol {x}) - \theta^*(\boldsymbol {x})| < \varepsilon \right) > 0, \text{ for any } \varepsilon > 0,
		\end{align*}
		
		\noindent where $\Pi$ is the prior density for $\theta$. Thus, the prior needs to provide positive density for all neighborhoods around the parameter value that minimizes the KL divergence from the true model. In this paper, multivariate normal and $\mathcal T$ priors are assumed for the parameters, and since they have support all over $\mathbb{R}^p$, the assumption is satisfied.
		
		Concerning Assumption \ref{ass:Rama4}, \citet{Ramamoorthi2015} gives a sufficient condition consisting of the following two clauses,
		
		\begin{enumerate}[label = (\alph*)]
			\item $f_{t_1} / f_{t_2}$ is continuous in $(t_1, t_2)$ for each $y$, and 
			\item the true density $f_{0 \boldsymbol {x}}$ is continuous in $\boldsymbol {x}$ for each $y$, and can be bounded by an integrable function in $y$. 
		\end{enumerate}
		
		\noindent Regarding the latter, $Y$ is a discrete random variable, and so $f_{0\boldsymbol {x}}$ is bounded on $[0, 1]$ for each possible $y$. Continuity in $\boldsymbol {x}$ has to be assumed. As for (a), the likelihood ratio is given by
		
		\begin{align*}
			\frac{f_{t_1}}{f_{t_2}}(y) 
			& = \left[\frac{\operatorname{expit}(t_1)}{\operatorname{expit}(t_2)}\right]^y
			\left[\frac{1 - \operatorname{expit}(t_1)}{1 - \operatorname{expit}(t_2)}\right]^{1-y} \\
			& = \left[\frac{1 + e^{-t_2}}{1 + e^{-t_1}}\right]^y
			\left[\frac{e^{-t_1}}{1 + e^{-t_1}}\right]^{1-y} 
			\left[\frac{e^{-t_2}}{1 + e^{-t_2}}\right]^{-(1-y)},
		\end{align*}
		
		\noindent and is continuous in $(t_1, t_2)$, seeing as it is a composition of continuous functions.
		
		For the last assumption, $\log f_{\theta_{\boldsymbol {x}}} / f_{\theta^*_{\boldsymbol {x}}}$ being uniformly bounded is a sufficient condition according to \citet{Ramamoorthi2015}. Taking the logarithm of the likelihood ratio given above yields
		
		\begin{align*}
			\log \left[ \frac{f_{\theta_{\boldsymbol {x}}}}{f_{\theta^*_{\boldsymbol {x}}}}(y) \right] 
			& = \log \left\{\left[\frac{1 + e^{-\theta^*_{\boldsymbol {x}}}}{1 + e^{-\theta_{\boldsymbol {x}}}}\right]^y
			\left[\frac{e^{-\theta_{\boldsymbol {x}}}}{1 + e^{-\theta_{\boldsymbol {x}}}}\right]^{1-y} 
			\left[\frac{e^{-\theta^*_{\boldsymbol {x}}}}{1 + e^{-\theta^*_{\boldsymbol {x}}}}\right]^{-(1-y)}\right\}\\
			& = y \log \left[\frac{1 + e^{-\theta^*_{\boldsymbol {x}}}}{1 + e^{-\theta_{\boldsymbol {x}}}}\right] 
			+ (1 - y) \log \left[\frac{1 + e^{-\theta^*_{\boldsymbol {x}}}}{1 + e^{-\theta_{\boldsymbol {x}}}} \right] +
			(1 - y)(\theta^*_{\boldsymbol {x}} - \theta_{\boldsymbol {x}}) \\
			& = (1-y) (\theta^*_{\boldsymbol {x}} - \theta_{\boldsymbol {x}}) +  \log \left[\frac{1 + e^{-\theta^*_{\boldsymbol {x}}}}{1 + e^{-\theta_{\boldsymbol {x}}}}\right].
		\end{align*}
		\noindent It is given by assumption that each element of $\Theta$ maps to $[-K, K]$, meaning that $1 < 1 + \exp \left\{- \theta_{\boldsymbol {x}} \right\} < 1  + \exp \{K\}$ for every $\boldsymbol {x} \in \mathcal{X}$. Thus, since the logarithm is an increasing function,
		
		\begin{align*}
			\log \left[\frac{1 + e^{-\theta^*_{\boldsymbol {x}}}}{1 + e^{-\theta_{\boldsymbol {x}}}}\right] 
			\leq \log \left[\frac{1 + e^{-\theta^*_{\boldsymbol {x}}}}{1}\right] 
			\leq\log \left[1 + e^{K}\right] < \infty.
		\end{align*}
		
		\noindent Moreover, $|\theta^*_{\boldsymbol {x}} - \theta_{\boldsymbol {x}}| \leq 2K$ for every $\boldsymbol {x} \in \mathcal{X}$, which gives the required uniform boundedness.

		\subsection*{Linear regression with $\mathcal T$ prior} 
		As in the previous section, the $\mathcal T$ prior has positive density all over $\mathbb{R}^p$, meaning that it satisfies the relevant conditions on the prior density. Thus, what remains is to check whether the assumptions admit the normal linear model.

		Assumptions \ref{ass:Rama1}--\ref{ass:Rama2} are satisfied by the linear model, and verification follows from the same argument as above. Similarly, the family  $\mathcal{F}$ is defined as
		
		\begin{align*}
			\mathcal{F} = \left\{ \frac{1}{\sqrt{2\pi \sigma^2}} e^{-(y - t)^2/2\sigma^2} : t \in [-K, K] \right\},
		\end{align*}
		
		\noindent where $t$ is bounded due to the fact that it is a linear combination of finitely many terms.

		Assumption \ref{ass:Rama4} can be verified using the same sufficient conditions as previously. Starting with the likelihood ratio, it is given by 
		
		\begin{align*}
			\frac{f_{t_1}}{f_{t_2}} = \frac{\sigma_2}{\sigma_1} \exp \left\{-\frac{1}{2} \left[\frac{(y - t_1)^2}{\sigma^2_1} - \frac{(y - t_2)^2}{\sigma^2_2} \right] \right\}.
		\end{align*}
		
		\noindent Thus, it is continuous in $(t_1, t_2)$ for every $y$ by virtue of being a composition of continuous functions. Regarding the true density $f_{0\boldsymbol {x}}$, continuity in $\boldsymbol  x$ has to be assumed once more. Moreover, the normal density is bounded by $e^{-(x - \mu)^2/2\sigma^2}/\sigma$, which is integrable.

		For assumption \ref{ass:Rama5}, it is again sufficient that $\log f_{\theta_{\boldsymbol x}} /  f_{\theta^*_{\boldsymbol x}}$ is uniformly bounded. Substituting $\theta_{\boldsymbol x}$ and $\theta^*_{\boldsymbol x}$ into the previous expression and taking the log gives
		
		\begin{align*}
			\log \left(\frac{f_{\theta_{\boldsymbol x}}}{f_{\theta^*_{\boldsymbol x}}} \right) 
			= \log \left(\frac{\sigma_2}{\sigma_1}\right) -\frac{1}{2} \left[\frac{(y - \theta_{\boldsymbol x})^2}{\sigma^2_1} - \frac{(y - \theta^*_{\boldsymbol x})^2}{\sigma^2_2} \right]. 
		\end{align*}
		
		\noindent Again, we assume the model is finite-dimensional, so both $\theta_{\boldsymbol x}$ and $\theta^*_{\boldsymbol x}$ have to be bounded, say by some constant $K$. Thus, 
		
		\begin{align*}
			\frac{(y - \theta_{\boldsymbol x})^2}{\sigma^2_1} \leq \frac{\max\left\{(y - K)^2 , (y + K)^2 \right\}}{\sigma^2_1},
		\end{align*}
		
		\noindent and the same goes for the other term. Thus, combining with the fact that the squares are always positive, it follows that
		
		\begin{align*}
			-\frac{\max\left\{(y - K)^2 , (y + K)^2 \right\}}{\sigma^2_2} \leq \left[\frac{(y - \theta_{\boldsymbol x})^2}{\sigma^2_1} - \frac{(y - \theta^*_{\boldsymbol x})^2}{\sigma^2_2} \right] \leq \frac{\max\left\{(y - K)^2 , (y + K)^2 \right\}}{\sigma^2_1},
		\end{align*}
		
		\noindent and thus that
		
		\begin{align*}
			C - \frac{ \max\left\{(y - K)^2 , (y + K)^2 \right\} }{2\sigma^2_1} \leq \log \left(\frac{f_{\theta_{\boldsymbol x}}}{f_{\theta^*_{\boldsymbol x}}} \right) \leq C + \frac{ \max\left\{(y - K)^2 , (y + K)^2 \right\} }{2\sigma^2_2},
		\end{align*}
		
		\noindent where $C = \log(\sigma_2 / \sigma_1)$ is assumed constant.
	\end{appendices}
	
\end{document}